\title{Minimal free resolutions of ideals of minors associated to pairs of matrices}
\author{Andr\'as Cristian L\H{o}rincz}
\address{Max Planck Institute for Mathematics in the Sciences, Inselstrasse 22, Leipzig, Germany 04103}
\email{lorincz@mis.mpg.de}
\subjclass[2010]{Primary 13D02, 14F05, 14M12, 16G20}
 \newtheorem{theorem}{Theorem}[section]
 \newtheorem{lemma}[theorem]{Lemma}
 \newtheorem{prop}[theorem]{Proposition}
 \newtheorem{example}{Example}[section]
\theoremstyle{remark}
\DeclareMathOperator{\ShHom}{\mathscr{H}\text{\kern -3pt {\calligra\large om}}\,}
\DeclareMathOperator{\Dim}{\mathbf{dim}}
\newcommand{\A}{\mathbb{A}}
\newcommand{\D}{\mathbb{D}}
\newcommand{\nat}{\mathbb{N}}
\renewcommand{\a}{\alpha}
\renewcommand{\b}{\beta}
\newcommand{\bo}{\bigoplus}
\newcommand{\E}{\mathcal{E}}
\newcommand{\bw}{\bigwedge}
\newcommand{\kk}{\Bbbk}
\renewcommand{\ll}{\lambda}
\newcommand{\oo}{\otimes}
\renewcommand{\SS}{\mathbb{S}}
\renewcommand{\L}{\mathcal{L}}
\newcommand{\R}{\mathcal{R}}
\newcommand{\B}{\mathcal{B}}
\newcommand{\Q}{\mathcal{Q}}
\newcommand{\M}{\mathcal{M}}
\newcommand{\Ext}{\operatorname{Ext}}
\newcommand{\Fl}{\operatorname{Flag}}
\newcommand{\GL}{\operatorname{GL}}
\newcommand{\Hom}{\operatorname{Hom}}
\newcommand{\End}{\operatorname{End}}
\newcommand{\Rep}{\operatorname{Rep}}
\newcommand{\rank}{\operatorname{rank}}
\newcommand{\Gr}{\operatorname{Gr}}
\renewcommand{\ker}{\operatorname{ker}}
\newcommand{\im}{\operatorname{Im}}
\newcommand{\ol}[1]{\overline{#1}}
\begin{document}

 \begin{abstract} 
Consider the affine space consisting of pairs of matrices $(A,B)$ of fixed size, and its closed subvariety given by the rank conditions $\rank A \leq a, \, \rank B \leq b$ and $\rank (A\cdot B) \leq c$, for three non-negative integers $a,b,c$. These varieties are precisely the orbit closures of representations for the equioriented $\A_3$ quiver. In this paper we construct the (equivariant) minimal free resolutions of the defining ideals of such varieties. We show how this problem is equivalent to determining the cohomology groups of the tensor product of two Schur functors of tautological bundles on a 2-step flag variety. We provide several techniques for the determination of these groups, which is of independent interest. 
\end{abstract}

\maketitle

\section*{Introduction}\label{sec:intro}

In the seminal paper \cite{lasc}, Lascoux determined the minimal free resolutions for the determinantal varieties. This has led generalizations in various directions, such as the Kempf--Lascoux--Weyman geometric technique for calculating minimal free resolutions of other varieties linked to representation theory (see \cite{jerzy}).

In this paper we construct the (equivariant) minimal free resolutions of the defining ideals of orbit closures of the equioriented $\A_3$ quiver based on the Kempf--Lascoux--Weyman technique. This approach has been used in \cite{kavita2,kavita,bubfree} to determine minimal free resolutions of 1-step orbit closures of quivers. For the $\A_2$ and non-equioriented $\A_3$ quivers all representations are 1-step (see \cite{kavita2}). However, this fails for the equioriented $\A_3$ quiver \cite[Section 1]{bubfree}. Because of this, dealing with the case of the equioriented $\A_3$ quiver in this context is substantially more difficult than the non-equioriented case.

In our case, we show that the problem of determining the terms in the minimal free resolutions is equivalent to computing the cohomology of some vector bundles on a 2-step flag variety. With the optimal choice of desingularizations, such bundles can be written as a tensor products of two Schur functors -- one applied to a tautological subbundle and the other to a tautological quotient bundle. The difficulty stems from the fact that the Borel--Weil--Bott Theorem is not directly applicable to such bundles -- this is a consequence to the failure of the 1-step property. Therefore, we devote a considerable part of the paper to provide methods of computation for the cohomology of such bundles. These bundles are not semisimple, and as such, it is important to study them as a first step towards the general problem of determining the cohomology of equivariant vector bundles on flag varieties that are not semisimple. Such problems have been studied for Grassmannians in \cite{ottaviani}.

As a consequence of our calculations, we recover that orbit closures of the equioriented $\A_3$ quivers have rational singularities (hence, are normal and Cohen--Macaulay), and we describe explicitly the minimal generators of their defining ideals.
Geometric properties of orbit closures of quivers have been studied extensively, and it is an active area of research (see
\cite{expo} for an exposition).  It has been shown (see \cite{abea,orb1,orb2,ryan,lak}) that for quivers of type $\A$ and $\D$ orbit closures have rational singularities. Furthermore, for equioriented type $\A$ quivers it was shown in \cite{lak} that singularities of orbit closures are identical to singularities of Schubert varieties. Other results regarding singularities of varieties of quiver representations can be found in \cite{bubsing, bfun, bubdecomp} for zero sets of semi-invariants, and in \cite{radical} for quivers with nodes.


The article is organized as follows. In Section \ref{sec:quiv} we recall some facts about the representation theory of the equioriented $\A_3$ quiver, then construct the desingularizations of its orbit closures that are the most suitable for our calculations. In Section \ref{sec:bott} we introduce some basic notation for partitions and recall the Borel--Weil--Bott theorem. In Section \ref{sec:geom}, we apply the Kempf--Lascoux--Weyman geometric technique for the chosen desingularizations, reformulating the problem of finding minimal free resolutions in terms of the bundles mentioned above (see Proposition \ref{prop:conclude}). In Section \ref{sec:coho} we discuss three methods to compute the cohomology of such bundles, the ones in Sections \ref{sec:refineschur} and \ref{sec:definschur} based on Schur complexes. In Section \ref{sec:applications} we apply these methods to compute the minimal free resolutions of the defining ideals of orbit closures, and we describe explicitly the minimal generators of these ideals.

\section{Preliminaries}\label{sec:prelim}

 Throughout we work over a field $\kk$ of characteristic $0$.

\subsection{Quivers}\label{sec:quiv}
A quiver $Q$ is an oriented graph, i.e. a pair $Q=(Q_0,Q_1)$ formed by a finite set of vertices $Q_0$ and a finite set of arrows $Q_1$. An arrow $\a$ has a head $h\a$, and tail $t\a$, that are elements in $Q_0$:

\[\xymatrix{
t\a \ar[r]^{\a} & h\a
}\]

A representation $V$ of $Q$ is a family of finite dimensional vector spaces $\{V_x\,|\, x\in Q_0\}$ together with linear maps $\{V(\a) : V_{t\a}\to V_{h\a}\, | \, \a\in Q_1\}$. The dimension vector $\Dim V\in \nat^{Q_0}$ of a representation $V$ is the tuple $\Dim V=(\dim V_x)_{x\in Q_0}$. A morphism $\phi:V\to W$ of two representations $V,W$ is a collection of linear maps $\phi = \{\phi(x) : V_x \to W_x\,| \,x\in Q_0\}$, with the property that for each $\a\in Q_1$ we have $\phi(h\a)V(\a)=W(\a)\phi(t\a)$. 

We form the affine space of representations with dimension vector $\mathbf{d}\in \nat^{Q_0}$ by
$$\Rep(Q,\mathbf{d}):=\displaystyle\bigoplus_{\a\in Q_1} \Hom(\kk^{d_{t\a}},\,\kk^{d_{h\a}}).$$
The group 
$$\GL(\mathbf{d}):= \prod_{x\in Q_0} \GL(d_x)$$
acts by conjugation on $\Rep(Q,\mathbf{d})$ in the obvious way. Under the action $\GL(\mathbf{d})$ two elements lie in the same orbit if and only if they are isomorphic as representations.

From now on $Q$ denotes the equioriented $\A_3$ quiver

\[\xymatrix{
1 \ar[r]^\a & 2 \ar[r]^\b &3
}\]

It is known (see \cite{elements}) that $Q$ has (up to isomorphism) six indecomposable representations: the simples $S_1,S_2,S_3$, the injective cover $I_2$, the projective cover $P_2$ and the injective-projective $I_3$. The dimension vectors of these indecomposables are $(1,0,0),\, (0,1,0),\, (0,0,1),\, (1,1,0),\, (0,1,1),\, (1,1,1)$, respectively. 

Fix a dimension vector $\mathbf{d} = (d_1,d_2,d_3) \in \nat^3$. Fix  a representation $V\in \Rep(Q,\mathbf{d})$, which by the above has a decomposition

\begin{equation}\label{eq:dec}
V \, \cong \, S_1^{a_1}  \oplus S_2^{a_2} \oplus S_3^{a_3} \oplus I_2^{b_1} \oplus P_2^{b_2} \oplus I_3^c,
\end{equation}
for some $a_1,a_2,a_3, b_1,b_2,c \in \nat$. For convenience, we describe the Auslander-Reiten quiver of $Q$ (see \cite{elements}) together with a diagram recording the multiplicities introduced:

\[ \xymatrix@C-0.5pc{
& & I_3 \ar[dr] & &\\
& P_2 \ar[ur]\ar[dr] & & I_2 \ar[dr] &\\
S_1 \ar[ur] & & S_2 \ar[ur] & & S_3
} \hspace{1in}
\xymatrix{
& & c \ar[dr] & &\\
& b_2 \ar[ur]\ar[dr] & & b_1 \ar[dr] &\\
a_1 \ar[ur] & & a_2 \ar[ur] & & a_3
}
\]

We have the following equations involving the multiplicities:
\begin{equation}\label{eq:equa}
\begin{split}
d_1 = a_1 + b_1 + c,\,\,\,\, d_2 = a_2 + b_1 + b_2 + c, \,\,\,\, d_3 = a_3 + b_2 + c,\\
 \rank \, V(\a) = b_1 + c, \,\,\,\, \rank \, V(\b) = b_2 + c, \,\,\,\, \rank \, V(\b) \circ V(\a) = c.
\end{split}
\end{equation}
In particular, the isomorphism class of a representation is completely determined by the ranks of $V(\a), V(\b)$ and $V(\b)\circ V(\a)$, and orbit closures are indeed the same as the rank varieties mentioned in the Introduction (see \cite{abeasis}).

In case of Dynkin quivers, Reineke \cite{reineke} constructs desingularizations for all orbit closures. These are total spaces of some vector bundles over a product of flag varieties. Inspired by this, we construct desingularizations that make calculations via the KLM geometric technique as accessible as possible (more precisely, see Proposition \ref{prop:conclude}).

For non-negative integers $r_1 \leq r_2 \leq n$ we denote by $\Fl(r_1, r_2, n)$ the 2-step flag variety consisting of flags of spaces $R_1 \subset R_2 \subset \kk^n$ with $\dim R_i=r_i$. Similarly, for $r\leq m$ we denote by $\Gr(r,m)$ the Grassmannian consisting of subspaces $R\subset \kk^m$ with $\dim R = r$.

Take $V$ as in (\ref{eq:dec}), and consider its orbit closure $\ol{O}_V$. Consider the variety
\[\Rep(Q,\,\mathbf{d}) \times \Fl(b_1+c, \, d_2 - b_2, \, d_2) \times \Gr(c,\, d_3), \]
viewed as a trivial bundle over $\Fl(b_1+c, \, d_2 - b_2, \, d_2) \times \Gr(c, d_3)$, and let $Z$ denote the subset consisting of elements of the form $(V,\, R_1 \subset R_2,\, R)$ such that 
\[ \im V(\a) \subseteq R_1 \,\,\,\, \mbox{and} \,\,\,\, V(\b)(R_2) \subseteq R.\]
Clearly, $Z$ is a subbundle, and projection to the first factor gives a proper map $q: Z \to \ol{O}_V$ (see (\ref{eq:equa})).

\begin{prop}\label{prop:desing}
The map $q: Z \to \ol{O}_V$ constructed above is a resolution of singularities.
\end{prop}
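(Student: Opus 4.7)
The plan is to verify the three standard requirements for a resolution of singularities: that $Z$ is smooth and irreducible, that $q$ is proper with image $\ol{O}_V$, and that $q$ is birational.

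The first two points should follow directly from the construction. Since $Z$ is a subbundle of the trivial bundle over the smooth, irreducible, projective variety $B := \Fl(b_1+c,\,d_2-b_2,\,d_2) \times \Gr(c,\,d_3)$, it is itself a vector bundle over $B$ and therefore smooth and irreducible. The map $q$ factors as the closed immersion $Z \hookrightarrow \Rep(Q,\mathbf{d}) \times B$ followed by projection to $\Rep(Q,\mathbf{d})$, and the latter is proper because $B$ is projective, so $q$ is proper.

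To identify the image, I would take $(V', R_1 \subset R_2, R) \in Z$ and check the three rank conditions defining $\ol{O}_V$: the inclusion $\im V'(\a) \subseteq R_1$ gives $\rank V'(\a) \leq b_1+c$; using $V'(\b)(R_2) \subseteq R$ and factoring $V'(\b)$ through $R_2$ and the quotient $V_2/R_2$ gives $\rank V'(\b) \leq \dim R + \dim(V_2/R_2) = c + b_2$; and the chain $\im V'(\b)V'(\a) \subseteq V'(\b)(R_1) \subseteq V'(\b)(R_2) \subseteq R$ gives $\rank V'(\b) V'(\a) \leq c$. By (\ref{eq:equa}) these are precisely the rank conditions defining $\ol{O}_V$, so $q(Z) \subseteq \ol{O}_V$. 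Conversely, for any $V' \in O_V$ the explicit choices $R_1 := \im V'(\a)$, $R := \im V'(\b)V'(\a)$, and $R_2 := \im V'(\a) + \ker V'(\b)$ produce a preimage (with the correct dimensions by (\ref{eq:equa})), so $q(Z)$ is a closed subset containing the dense orbit $O_V$, hence $q(Z) = \ol{O}_V$.

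For birationality I would show that $q$ is set-theoretically bijective over $O_V$: then $q^{-1}(O_V) \to O_V$ is a proper bijective morphism to a smooth (hence normal) variety in characteristic zero, so it is an isomorphism by Zariski's main theorem. Given $V' \in O_V$, the condition $\im V'(\a) \subseteq R_1$ together with $\dim R_1 = b_1+c = \rank V'(\a)$ forces $R_1 = \im V'(\a)$; the chain $\im V'(\b)V'(\a) \subseteq V'(\b)(R_2) \subseteq R$ combined with $\dim \im V'(\b)V'(\a) = c = \dim R$ forces $R = \im V'(\b)V'(\a)$; and finally $V'(\b)(R_2) \subseteq R$ forces $R_2 \subseteq V'(\b)^{-1}(R) = \im V'(\a) + \ker V'(\b)$. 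The main obstacle will be the last uniqueness step: it requires the dimension computation $\dim(\im V'(\a) \cap \ker V'(\b)) = (b_1+c) - \rank V'(\b)V'(\a) = b_1$, which yields $\dim(\im V'(\a) + \ker V'(\b)) = (b_1+c)+(a_2+b_1)-b_1 = d_2-b_2 = \dim R_2$ and forces equality. This count is immediate from the rank equations in (\ref{eq:equa}) once spelled out, and completes the argument.
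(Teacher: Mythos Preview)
Your proof is correct and follows essentially the same approach as the paper: the paper's proof reduces immediately to showing that $q^{-1}(V)$ is a single point and verifies this via the same three forced equalities $R_1=\im V(\a)$, $R=\im V(\b)V(\a)$, and $R_2=R_1+\ker V(\b)$, leaving smoothness, properness, and the identification of the image implicit from the subbundle construction. Your version simply spells out those implicit points in detail and makes the dimension count for $R_2$ explicit.
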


\begin{proof}
We need to show only that $q^{-1}(V)$ is a point. Let $(R_1 \subset R_2,\, R)$ be a point in the fiber. Since $\rank V(\a) = b_1 + c$, we must have $R_1 = \im V(\a)$. Since $\rank V(\b)\circ V(\a) = c$, we must have $R=\im V(\b)\circ V(\a)$. Lastly, since $\rank V(\b) = b_2+c$, we obtain $R_2 = R_1 + \ker V(\b)$.
\end{proof}

\subsection{Borel--Weil--Bott Theorem}\label{sec:bott}

A partition (with $r$ parts) $\lambda = (\lambda_1,\,\lambda_2,\dots,\,\lambda_r)$ is a non-increasing sequence of non-negative integers. Sometimes we omit writing the zero entries of partitions. For a partition $\lambda$ we associate its corresponding Young diagram that consists of $\lambda_i$ boxes in the $i$th row. We denote the number of boxes by $|\lambda|:= \lambda_1+\dots+\lambda_r$. We denote by $u_\lambda$ the size of the Durfee square of $\lambda$, that is, the biggest square fitting inside of the Young diagram of $\lambda$. Its defining property is $\lambda_{u_\lambda}\geq u_\lambda$ and $\lambda_{u_\lambda+1}\leq u_\lambda$, which also makes sense for any non-increasing sequence of integers $\ll$.

For a partition $\ll$, we denote by $\lambda^+$ be the partition $(\lambda_1-u_{\lambda},\,\lambda_2-u_{\lambda},\dots,\,\lambda_{u_\lambda}-u_{\lambda})$ and by $\lambda^{-}$ the partition $(\lambda_{u_\lambda+1},\dots,\,\lambda_r)$. Pictorially, we can view the Young diagram of $\lambda$ as the composite of the partitions $\lambda^+, \,\lambda^{-}$ and a $u_\lambda\times u_\lambda$ square as follows:
\[ \hspace{-0.3in} \lambda: \hspace{0.2in}
\begin{aligned}
\begin{tikzpicture}
\draw (0,0.9)--(0,0)--(0.3,0)--(0.3,0.3)--(0.9,0.3)--(0.9,0.6)--(1.2,0.6)--(1.2,0.9);
\node at (0.45,0.6) {$\lambda^-$};
\draw [thick] (0,0.9)--(0,2.1)--(1.2,2.1)--(1.2,0.9)--(0,0.9);
\node at (0.6,1.5) {$u_\lambda \hspace{-0.2pc} \times \hspace{-0.2pc} u_\lambda$};
\draw(1.2,0.9)--(1.5,0.9)--(1.5,1.2)--(2.1,1.2)--(2.1,1.5)--(2.4,1.5)--(2.4,1.8)--(2.7,1.8)--(2.7,2.1)--(1.2,2.1);
\node at (1.7,1.6) {$\lambda^+$};
\end{tikzpicture}
\end{aligned}
\]
The conjugate partition $\lambda'=(\lambda_1',\dots, \,\lambda_m')$ is a partition with $\lambda'_i$ being the number of boxes in the $i$th column of the Young diagram of $\lambda$. Given another partition $\mu$, we write $\mu \subset \ll$ if the Young diagram of $\ll$ contains the diagram of $\mu$. Denote by $-\lambda$ the non-increasing sequence of non-positive integers $(-\lambda_r,\,-\lambda_{r-1},\dots,\, -\lambda_1)$.

A weight (with $n$ parts) is any sequence of integers  $\delta:=(\delta_1,\, \delta_2,\dots ,\,\delta_n)$. Consider the action of the symmetric group $\Sigma_n$ on weights defined as follows: a transposition $\sigma_i=(i,i+1)$ acts according to the exchange rule
$$\sigma_i \cdot \delta = (\delta_1,\dots ,\,\delta_{i-1},\,\delta_{i+1}-1,\,\delta_{i}+1,\,\delta_{i+2},\dots,\,\delta_n).$$ 
Let $N(\delta)$ be length of the (unique) permutation $\sigma\in \Sigma_n$ such that the sequence $\sigma \cdot \delta$ is non-increasing, if there exists such a permutation, otherwise put $N(\delta):=-\infty$. Equivalently, $N(\delta)$ is the minimal number of exchanges applied to $\delta$ that turn it non-increasing. 

For any partition $\ll$, we denote by $\SS_\ll$ the corresponding Schur functor (see \cite{jerzy}). We recall the tautological sequence of bundles on the Grassmannian $\Gr(r,n)$ (here $W=\kk^n$)
\[ 0 \to \R \to W \times \Gr(r,n) \to \Q \to 0.\]

In order to compute the cohomology of the bundle $\SS_{\ll} \mathcal{R} \otimes \SS_{\mu} \mathcal{Q}^*$ on $\Gr(r,n)$, we apply the Borel--Weil--Bott theorem (see \cite[Corollary 4.1.7, Corollary 4.1.9]{jerzy}). Namely, consider the weight
$$\delta:=(-\mu,\ll),$$
where $-\mu$ has $n-r$ parts and $\ll$ has $r$ parts (appending with zeroes, if necessary).
\begin{theorem}\label{thm:bott}
The cohomology $H^i(\Gr(r,n), \SS_{\ll} \mathcal{R} \otimes \SS_{\mu} \mathcal{Q}^*)$ vanishes when $i\neq N(\delta)$, and 
$$H^{N(\delta)}(\Gr(r,n), \SS_{\ll} \mathcal{R} \otimes \SS_{\mu} \mathcal{Q}^*)=\SS_{\tau(\delta)} W,$$
where $\tau(\delta)$ is the non-increasing sequence obtained from $\delta$.
\end{theorem}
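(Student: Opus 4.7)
My plan is to exhibit the theorem as the specialization of the general Borel--Weil--Bott theorem on partial flag varieties (as in \cite[Chapter 4]{jerzy}) to the Grassmannian, and then to translate the resulting Weyl-group combinatorics into the exchange rule defining $N(\delta)$. First, I identify $\Gr(r,n)$ with the homogeneous space $\GL(W)/P$ for the maximal parabolic $P$ stabilizing an $r$-dimensional subspace of $W$, so that $\R$ and $\Q$ are the homogeneous bundles associated to the standard representations of the $\GL_r$- and $\GL_{n-r}$-factors of the Levi quotient of $P$. The bundle $\SS_\ll \R \otimes \SS_\mu \Q^*$ is then the homogeneous bundle attached to the irreducible $P$-module of highest weight $(\ll_1,\ldots,\ll_r,-\mu_{n-r},\ldots,-\mu_1)$, the entries of $-\mu$ appearing in reverse order because $\Q^*$ carries the dual weights. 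After conjugating by the longest element of the Weyl group so as to match the conventions of \cite{jerzy}, this weight is precisely $\delta=(-\mu,\ll)$.

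Next, I invoke the general Borel--Weil--Bott theorem: with $\rho=(n-1,n-2,\ldots,1,0)$ and dot-action $\sigma\cdot\delta:=\sigma(\delta+\rho)-\rho$, one has $H^i(\Gr(r,n),\L_\delta)=0$ unless $\delta+\rho$ is regular, in which case the unique $w\in\Sigma_n$ with $w(\delta+\rho)$ strictly decreasing contributes $\SS_{w\cdot\delta}W$ in degree $i=\ell(w)$, and zero in all other degrees.

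The remaining step is a combinatorial translation. A direct computation shows that for a simple transposition $\sigma_i=(i,i+1)$,
\[ \sigma_i\cdot\delta=(\delta_1,\ldots,\delta_{i-1},\,\delta_{i+1}-1,\,\delta_i+1,\,\delta_{i+2},\ldots,\delta_n), \]
which is precisely the exchange rule in the definition of $N(\delta)$. Reduced decompositions of the sorting permutation $w$ correspond bijectively to minimal sequences of such exchanges that render the sequence non-increasing, so $N(\delta)=\ell(w)$ and $\tau(\delta)=w\cdot\delta$ is the sorted weight. The impossibility of sorting (i.e.\ $N(\delta)=-\infty$) is equivalent to the non-regularity of $\delta+\rho$, which gives the vanishing case.

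The main ``obstacle'' is not conceptual but notational: getting the sign and ordering in the assignment $\SS_\mu \Q^*\leadsto -\mu$ (reversed, in the first $n-r$ coordinates) consistent with the Weyman-style conventions that define the dot-action and the statistic $N(\delta)$; once this bookkeeping is done correctly, the theorem is an immediate specialization.
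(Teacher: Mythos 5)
The paper states Theorem~\ref{thm:bott} as a citation of \cite[Corollary 4.1.7, Corollary 4.1.9]{jerzy} and gives no proof of its own, so there is nothing in the paper to compare against line by line; your task here is to check that your derivation is a correct specialization of the cited general result, and in outline it is. You correctly identify the bundle $\SS_\ll\R\otimes\SS_\mu\Q^*$ as a homogeneous bundle on $\GL(W)/P$, invoke the parabolic Borel--Weil--Bott theorem with the dot action for $\rho=(n-1,\dots,1,0)$, and verify that the exchange rule in the definition of $N(\delta)$ is exactly $\sigma_i\cdot\delta=\sigma_i(\delta+\rho)-\rho$, so that $N(\delta)=\ell(w)$ and $\tau(\delta)=w\cdot\delta$. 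This is the same route Weyman takes in Chapter~4.

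One bookkeeping remark, since you flag it yourself as the only delicate point: the passage from your natural weight $(\ll_1,\dots,\ll_r,-\mu_{n-r},\dots,-\mu_1)$ to the paper's $\delta=(-\mu,\ll)=(-\mu_{n-r},\dots,-\mu_1,\ll_1,\dots,\ll_r)$ is not conjugation by the longest element $w_0$ of $\Sigma_n$: applying $w_0$ would reverse the entire string and produce $(-\mu_1,\dots,-\mu_{n-r},\ll_r,\dots,\ll_1)$, which is neither weight. The correct reconciliation is either to take $P$ to be the stabilizer of an $(n-r)$-dimensional quotient of $W$ rather than of an $r$-dimensional subspace (so that the $\Q$-block occupies the first $n-r$ torus coordinates from the start), or to conjugate by the block-interchange permutation $w_{r,n-r}$ rather than by $w_0$. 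Either fix is purely notational and does not affect the substance; but note that the block order genuinely matters for the value of $N(\delta)$ (for instance on $\Gr(1,2)$ with $\ll=\mu=(1)$, one has $N(-1,1)=1$ giving $H^1=\kk$, whereas $(1,-1)$ would give $N=0$ and the wrong answer), so the order $(-\mu,\ll)$ is the one that must emerge once conventions are fixed.
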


\section{The KLW geometric technique}
\label{sec:geom}

\vspace{0.05in}

In this section, we apply the Kempf--Lascoux--Weyman geometric technique for the equioriented $\A_3$ quiver. For more on the geometric technique, see \cite{jerzy}. 

Let $V\in \Rep(Q,\mathbf{d})$ be as in (\ref{eq:dec}), and consider the desingularization $q: Z \to \ol{O}_V$ from Proposition \ref{prop:desing}. We make the identification
\[\Rep(Q,\mathbf{d})\,=\, V^*_{1}\otimes V_{2} \,\,\oplus\,\, V_{2}^* \otimes V_3.\]
Let $\R_1 \subset \R_2$ denote the tautological subbundles on $\Fl(b_1+c, d_2 - b_2, c)$. Let $\R$ denote the tautological subbundle on $\Gr(c, d_3)$ and $\Q=V_3/\R$ the tautological quotient bundle. For simplicity, write $X=\Fl(b_1+c, d_2 - b_2, d_2) \times \Gr(c, d_3)$.

We view $\Rep(Q,\alpha) \times X$ as the total space of the trivial bundle $\E$ and $Z$ as the total space of some subbundle $\mathcal{S}$ of $\E$. Let $\xi$ denote the dual of the factorbundle $\E/\mathcal{S}$. More explicitly, it is given by the following locally free sheaf on $X$:
\begin{equation}\label{eq:ksi}
\xi \, = \, V_{1}\otimes (V_2/\R_1)^*\, \oplus\,\, \R_2 \otimes \Q^*.
\end{equation}

As in \cite[Theorem 5.1.2]{jerzy}, we consider a complex $F_\bullet$ with terms (here $A=\kk[\Rep(Q,\mathbf{d})]$)
\begin{equation}\label{eq:basic}
F_i = \bigoplus_{j\geq 0} \, H^j (X, \bigwedge^{i+j} \xi)\otimes A(-i-j).
\end{equation}

As we can see in Theorem \ref{thm:rational}, this gives the minimal free resolution of the defining ideal of $\ol{O}_V$.  In order to apply  \cite[Theorem 5.1.3]{jerzy}, we need to evaluate the cohomologies of the exterior powers $\bw^t \xi$, for $t\geq 0$.  For $1\leq t \leq \dim \xi$, we decompose $\bigwedge^{t} \xi$ using Cauchy's formula (see \cite{jerzy}):
\[\bigwedge^{t} \xi = \bo_{t_1+t_2=t}\,  \bw^{t_1} (V_1 \oo (V_2/\R_1)^*) \oo \bw^{t_2} (\R_2 \oo \Q^*)= \bo_{\substack{\ll, \, \mu \\ |\ll|+|\mu| = t}}\!\!\! \SS_{\mu'} V_1 \oo \SS_{\mu} (V_2/\R_1)^* \oo \SS_{\ll} \R_2 \oo \SS_{\ll'} \Q^*.\]
Hence, in order to describe the complex (\ref{eq:basic}), we need to compute for given partitions $\ll,\mu$:
\[ H^j \bigl(X,\,\, \SS_{\mu'} V_1 \oo \SS_{\mu} (V_2/\R_1)^* \oo \SS_{\ll} \R_2 \oo \SS_{\ll'} \Q^*\bigr) \cong \]
\begin{equation}\label{eq:mult}
\cong \SS_{\mu'} V_1 \oo H^{j-c \cdot u_\ll} \bigl( \Fl(b_1+c, \, d_2 - b_2, \, d_2),\, \SS_{\mu} (V_2/\R_1)^* \oo \SS_{\ll} \R_2 \bigr) \oo H^{c \cdot u_\ll}(\Gr(c,d_3), \,\SS_{\ll'} \Q^*),
\end{equation}
where $u_\ll$ is the Durfee size of $\ll$. We used Theorem \ref{thm:bott} to see that $\SS_{\ll'}\Q^*$ can have cohomology only in degree $c \cdot u_\ll$. In fact, putting $u:=u_\ll$ the cohomology is non-zero if and only if  $\ll_{u+c} \geq u$, when we have
\begin{equation}\label{eq:durf}
H^{c\cdot u}(\Gr(c,d_3),\,\SS_{\ll'} \Q^*)=\SS_{\big{(\ll_1'-c, \, \ll_2'-c, \dots,\, \ll'_u-c, \, u, \dots, \,u, \, \ll_{u+1}',\dots, \, \ll_{d_3-c}')}} V_3^*.
\end{equation}

For simplicity, put $V_2 = W, \, r_1=b_1+c, \, r_2=d_2-b_2, \, n=d_2$. We summarize the above discussion.

\begin{prop}\label{prop:conclude}
The problem of determining the (equivariant) terms of the minimal free resolutions for all orbit closures of the equioriented $\A_3$ quiver is equivalent to the problem of determining the cohomology (as representations of $\GL(n)$) of the bundles on $\Fl(r_1,r_2,n)$ of the form
\[ \SS_\ll \R_2 \oo \SS_\mu (W/\R_1)^*,\]
for all $0\leq r_1 \leq r_2 \leq n= \dim W$ and partitions $\ll,\mu$.
\end{prop}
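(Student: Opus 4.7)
The plan is to combine the Kempf--Lascoux--Weyman formula with the Cauchy decomposition and the Borel--Weil--Bott theorem on the Grassmannian factor of $X$. For the forward direction (flag cohomology $\Rightarrow$ resolutions), the KLW formula (\ref{eq:basic}) shows that the equivariant terms of the minimal free resolution of $\ol{O}_V$ are governed by $H^j(X, \bw^t \xi)$. The Cauchy decomposition already exhibited above writes $\bw^t \xi$ as a direct sum of summands of the form $\SS_{\mu'} V_1 \oo \SS_\mu (V_2/\R_1)^* \oo \SS_\ll \R_2 \oo \SS_{\ll'} \Q^*$, with $\SS_{\mu'} V_1$ constant on $X$. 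Applying the Künneth formula and then Theorem \ref{thm:bott} on the Grassmannian factor yields (\ref{eq:mult})--(\ref{eq:durf}), so that everything reduces to the flag-variety cohomology of $\SS_\ll \R_2 \oo \SS_\mu (V_2/\R_1)^*$ with $V_2 = W$, $r_1 = b_1 + c$, $r_2 = d_2 - b_2$, $n = d_2$.

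For the converse direction, I would fix arbitrary $0 \leq r_1 \leq r_2 \leq n$ and partitions $(\ll, \mu)$, and exhibit an orbit closure whose resolution exposes the corresponding flag cohomology. Choosing $c = r_1$, $b_1 = 0$, $b_2 = n - r_2$, $d_2 = n$, and $d_1, d_3$ sufficiently large (so that $\SS_{\mu'} V_1$ and the Schur functor of $V_3^*$ in (\ref{eq:durf}) are both nonzero), the desired flag cohomology appears as the $\GL(V_1) \times \GL(V_3)$-isotypic component of a single identifiable summand of $F_\bullet$, from which it can be read off.

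The main obstacle is the converse: verifying that the $\GL(V_1) \times \GL(V_3)$-weights coming from distinct pairs $(\ll, \mu)$ in the Cauchy decomposition are pairwise distinct in the large-dimension regime, so that each term of the resolution truly reconstructs a single flag-variety cohomology group rather than an unresolvable mixture. This reduces to the injectivity of the map sending $(\ll, \mu)$ to the pair formed by $\mu'$ and the shape appearing in (\ref{eq:durf}), which holds because $\mu$ is recovered from its conjugate and $\ll$ is recovered from its Durfee data together with that shape. Everything else is a direct unwinding of the preceding discussion.
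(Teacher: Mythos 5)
Your forward direction is exactly the paper's: the Cauchy decomposition, the K\"unneth factorization over $X$, and Theorem~\ref{thm:bott} applied on the Grassmannian factor reduce the terms of $F_\bullet$ to the flag-variety cohomology of $\SS_\ll \R_2 \oo \SS_\mu (W/\R_1)^*$.

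Your converse direction, however, has a genuine gap in the choice of parameters. You set $c = r_1$ and $b_1 = 0$, but with $c > 0$ the Grassmannian factor $\Gr(c,d_3)$ is nontrivial and the bundle $\SS_{\ll'}\Q^*$ contributes to (\ref{eq:mult}) only when $\ll_{u_\ll + c} \geq u_\ll$. For any $\ll$ with at most $u_\ll + c - 1$ nonzero parts (e.g.\ any $\ll$ with exactly $u_\ll$ parts, such as a rectangle), this fails, the factor $H^{c\cdot u_\ll}(\Gr(c,d_3),\SS_{\ll'}\Q^*)$ vanishes, and the summand $\SS_\ll \R_2 \oo \SS_\mu(W/\R_1)^*$ simply does not appear in $F_\bullet$. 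So the resolution of your chosen orbit closure contains no information about those $(\ll,\mu)$, and the claimed converse breaks down. Your worry about injectivity of $\ll \mapsto \tilde\ll$ via its Durfee data is also an unnecessary complication introduced by the same choice.

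The fix (and the route the paper takes) is to set $c = 0$, $b_1 = r_1$, $b_2 = n - r_2$, $d_2 = n$, with $d_1, d_3$ large. Then $r_1 = b_1 + c$ and $r_2 = d_2 - b_2$ recover the given flag data, the Grassmannian $\Gr(0,d_3)$ is a point, $\Q = V_3$ is a trivial bundle, and (\ref{eq:durf}) degenerates to $H^0 = \SS_{\ll'} V_3^*$ with no side conditions. Every pair $(\ll,\mu)$ with $\ll$ having at most $r_2$ parts and $\mu$ at most $n - r_1$ parts then contributes; moreover the injectivity concern becomes trivial, since the $\GL(V_1)\times\GL(V_3)$-isotypic label is simply $(\mu',\ll')$ and conjugation is involutive. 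One then collects the $\gamma$ appearing with $\SS_{\mu'}V_1 \oo \SS_{\ll'}V_3$ in $F_i$ and reads off $H^{|\ll|+|\mu|-i}(\SS_\ll\R_2\oo\SS_\mu(W/\R_1)^*) = \bigoplus_\gamma \SS_\gamma W$, exactly as in the paper.
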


\begin{proof}
As seen above, determining the cohomology groups of $\SS_\ll \R_2 \oo \SS_\mu (W/\R_1)^*$ (as representations of $\GL(n)$) gives the equivariant terms of the minimal free resolution $F_\bullet$ of orbit closures (see Theorem \ref{thm:rational}). 

Conversely, assume we know the equivariant terms of $F_\bullet$ for the case when $c=0$. Pick two partitions $\ll,\mu$.  Look at all representations in the term $F_i$ of the form 
\[\SS_{\mu'} V_1 \oo \SS_{\gamma} V_2 \oo \SS_{\ll'} V_3,\]
with $\gamma$ a non-increasing sequence of integers. Collect all such $\gamma$ in a set $\Gamma$ (counted with multiplicities).
From (\ref{eq:mult}) we get that 
\[H^{|\ll|+|\mu|-i}(\SS_\ll \R_2 \oo \SS_\mu (W/\R_1)^*) = \, \bigoplus_{\gamma  \in \Gamma}\, \SS_\gamma W.\]
\end{proof}

\section{Cohomology of the tensor product of Schur functors of tautological bundles}\label{sec:coho}

Fix $r_1\leq r_2 \leq n$ and consider the flag variety $X=\Fl(r_1,r_2,n)$. Let $\R_1,\R_2$ be the tautological subbundles of the trivial bundle $W$, with $\dim \R_i = r_i$ and $\dim W=n$. The goal in this section is to provide methods to compute the cohomology of bundles encountered in the previous section, namely
\[ \SS_\ll \R_2 \oo \SS_\mu (W/\R_1)^*,\]
for partitions $\ll,\mu$. We can assume that $\ll$ has at most $r_2$ parts, and $\mu$ has at most $n-r_1$ parts. The symmetry between the partitions $\ll$ and $\mu$ is as follows.

\begin{lemma}\label{lem:symm}
For any $i\geq 0$, we have a $\GL(W)$-isomorphism
\[H^i\bigl(\,\Fl(r_1,\, r_2, \,n), \, \SS_\ll \R_2 \oo \SS_\mu (W/\R_1)^*\, \bigr) \cong H^i\bigl( \,\Fl(n-r_2,\, n- r_1, \, n), \,\SS_\mu \R_2 \oo \SS_\ll (W/\R_1)^*\, \bigr)^*.\]
\end{lemma}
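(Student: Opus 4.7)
The plan is to realise the claimed isomorphism as pullback along a natural $\GL(W)$-equivariant duality between the two flag varieties, and then translate the action of $\GL(W)$ on $W^{*}$ into the dual representation on $W$. The starting point is the isomorphism
\[ \phi\colon \Fl(r_1,r_2,n)(W)\;\xrightarrow{\sim}\;\Fl(n-r_2,n-r_1,n)(W^*),\qquad (F_1\subset F_2)\;\mapsto\;(F_2^\perp\subset F_1^\perp),\]
where $(-)^\perp$ denotes the annihilator in $W^*$. This $\phi$ is $\GL(W)$-equivariant once $\GL(W)$ is made to act on the target through its natural action on $W^*$.

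Next I would compute the pullbacks of tautological bundles. Writing $\R_1'\subset\R_2'\subset W^*$ for the tautological subbundles on the target, the canonical identifications
\[ \phi^{*}\R_1'\cong (W/\R_2)^*,\qquad \phi^{*}\R_2'\cong (W/\R_1)^*,\qquad \phi^{*}(W^{*}/\R_i')\cong \R_{3-i}^{*}\]
combine to give
\[\phi^{*}\bigl(\SS_\mu\R_2'\oo\SS_\ll(W^{*}/\R_1')^{*}\bigr)\;\cong\;\SS_\mu(W/\R_1)^*\oo\SS_\ll\R_2,\]
so that pullback by $\phi$ induces an isomorphism of $\GL(W)$-modules
\[H^i\bigl(\Fl(r_1,r_2,n)(W),\SS_\ll\R_2\oo\SS_\mu(W/\R_1)^*\bigr)\;\cong\;H^i\bigl(\Fl(n-r_2,n-r_1,n)(W^{*}),\SS_\mu\R_2'\oo\SS_\ll(W^{*}/\R_1')^{*}\bigr),\]
where on the right $\GL(W)$ acts through its action on $W^{*}$.

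The last step is to recognise the right-hand side as $F(W^{*})$, where $F(U):=H^i\bigl(\Fl(n-r_2,n-r_1,n)(U),\SS_\mu\R_2\oo\SS_\ll(U/\R_1)^{*}\bigr)$ is the same cohomology construction viewed functorially in the ambient vector space. Since this cohomology decomposes into a direct sum of (rational) Schur modules $\SS_\nu U$ and since the canonical isomorphism $\SS_\nu(W^{*})\cong(\SS_\nu W)^{*}$ holds termwise, we obtain $F(W^{*})\cong F(W)^{*}$ as $\GL(W)$-representations, which is precisely the statement of the lemma. The main obstacle is the careful bookkeeping in the pullback computation: one must verify that the partitions $\ll$ and $\mu$ really do exchange roles and that the rank pairing $(n-r_2,n-r_1)$ matches correctly with the subquotient structure on the target of $\phi$. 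Once these identifications are pinned down, the passage from $F(W^{*})$ to $F(W)^{*}$ is a standard polynomial-functor argument.
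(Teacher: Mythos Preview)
Your proof is correct and is essentially a detailed unpacking of the paper's one-line argument (``work on the dual space $W^*$, where $(W/\R_1)^*$ becomes a tautological subbundle''): the annihilator map $\phi$, the identifications of tautological bundles under pullback, and the final $F(W^*)\cong F(W)^*$ step are exactly what that sentence abbreviates.
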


\begin{proof}
This follows by working on the dual space $W^*$ instead of $W$, where $(W/\R_1)^*$ becomes a tautological subbundle.
\end{proof}


\subsection{Splitting method}\label{sec:split}

In this section we consider what is perhaps the simplest approach. Namely, we take the exact sequences
\begin{equation}\label{eq:sequence}
 0 \to \R_1 \to \R_2 \to \R_2/\R_1\to 0   \,\,\, \mbox{ and } \,\,\, 0 \to (W/\R_2)^*\to (W/\R_1)^* \to (\R_2/\R_1)^* \to 0.
\end{equation}
Consider the respective split bundles 
\[\B_1=\SS_\ll (\R_1 \oplus \R_2/\R_1) \oo \SS_\mu (W/\R_1)^* \,\,\, \mbox{ and } \,\,\,  \B_2=\SS_\ll \R_2 \oo \SS_\mu (W/\R_2\oplus \R_2/\R_1)^*.\]
We can compute the cohomologies of $\B_1$ (resp. $\B_2$)  using the Littlewood--Richardson rule (e.g. \cite[Section 2.3]{jerzy}) and the (relative) Borel--Weil--Bott theorem. Let us describe the latter for the case of $\B_1$.

Let $\pi : \Fl(r_1, r_2, n) \to \Gr(r_1,n)$ be the map obtained by forgetting the space of dimension $r_2$. Then for partitions $\gamma,\nu$ we have (by abuse of notation, let $\pi^*( \SS_\mu(W/\R_1)^*) =  \SS_\mu(W/\R_1)^*$ and $\pi^*(\SS_{\nu}\R_1)= \SS_{\nu}\R_1$\,)
\begin{equation}\label{eq:proj}
\mathbb{R}^\bullet \pi_* \big(\SS_\gamma(\R_2/\R_1) \oo \SS_{\nu}\R_1\oo \SS_\mu(W/\R_1)^*\big) =\mathbb{R}^\bullet \pi_* \left(\SS_\gamma (\R_2/\R_1)\right) \oo  \SS_{\nu}\R_1\oo \SS_\mu(W/\R_1)^*,
\end{equation}
by the projection formula. Moreover, by the (relative) Borel-Weil-Bott theorem (see \cite[Theorem 4.1.8]{jerzy}) we have $\mathbb{R}^i \pi_* \SS_\gamma(\R_2/\R_1)=0$ for all $i\neq u_\gamma \cdot (n-r_2)$, and (when $\gamma_{u_\gamma}\geq u_\gamma + n-r_2$)
\begin{equation}\label{eq:push}
\mathbb{R}^{u_\gamma \cdot (n-r_2)} \pi_* ( \, \SS_\gamma(\R_2/\R_1) ) = \SS_{(\gamma_1 - (n-r_2), \dots, \,  \gamma_{u_\gamma}- (n-r_2), \, u_\gamma^{\,n-r_2}, \, \gamma^-)}W/\R_1.
\end{equation}
Since the derived pushforward lives in a single degree, we now can calculate cohomology on $\Gr(r_1,n)$ and use Theorem \ref{thm:bott} to obtain the cohomologies of $\B_1$. 

The bundle $\SS_\ll \R_2 \oo \SS_\mu (W/\R_1)^*$ has two filtrations induced by (\ref{eq:sequence}) with the associated graded $\B_1$ and $\B_2$, respectively. Hence, the cohomology of $\SS_\ll \R_2 \oo \SS_\mu (W/\R_1)^*$ is smaller in general then either the cohomology of  $\B_1$ or  $\B_2$ due to potential cancellations coming from connecting homomorphisms of spectral sequences.

\begin{example}
Consider $X=\Fl(1,2,3)$ and the bundle $\SS_{(3,2)} \R_2 \oo \SS_{(3,1)} (W/\R_1)^*$. We compute the cohomology of the two split bundles, and obtain that the only non-zero spaces are the following:
\[H^2(X,\B_1)  = H^3(X,\B_1)= \SS_{(1,1,-1)} W;\]
\[H^2(X,\B_2)  = H^3(X,\B_2) = \SS_{(1,0,0)} W.\]
Since the cohomology of $\SS_{(3,2)} \R_2 \oo \SS_{(3,1)} (W/\R_1)^*$ is smaller then either of the split bundles $\B_1,\B_2$, this implies that all the potential cancellations above (in degrees $2,3$) must occur, hence
\[H^i \bigl(X, \SS_{(3,2)} \R_2 \oo \SS_{(3,1)} (W/\R_1)^* \bigr)=0, \,\, \mbox{ for all } i\geq 0.\]
\end{example}

\begin{example}
Consider $X=\Fl(1,2,3)$ and the bundle $\SS_{(4,1)} \R_2 \oo \SS_{(4,1)} (W/\R_1)^*$. The only non-zero cohomologies of the split bundles are
\[H^2(X,\B_1)=\SS_{(0,0,0)}W \oplus \SS_{(1,0,-1)} W \oplus \SS_{(2,0,-2)}W \oplus \SS_{(1,1,-2)}W, \,\,\,\,\,\, H^3(X,\B_1)=\SS_{(1,1,-2)}W;\]
\[H^2(X,\B_2)=\SS_{(0,0,0)}W \oplus \SS_{(1,0,-1)} W \oplus \SS_{(2,0,-2)}W \oplus \SS_{(2,-1,-1)}W, \,\,\,\,\,\, H^3(X,\B_2)=\SS_{(2,-1,-1)}W.\]
Hence, all the potential cancellations (in degrees $2,3$) must hold, and the only non-zero cohomology is
\[H^2\bigl(X,\SS_{(4,1)} \R_2 \oo \SS_{(4,1)} (W/\R_1)^*\bigr)=\SS_{(0,0,0)}W \oplus \SS_{(1,0,-1)} W \oplus \SS_{(2,0,-2)}W.\]
\end{example}

In many instances this method is sufficient in describing all the cohomology spaces. However, in general other tools are needed as the following example shows:
\begin{example}\label{ex:later}
Consider $X=\Fl(1,3,4)$ and the bundle $\SS_{(3,1,0)}\R_2 \oo \SS_{(3,1,0)} (W/\R_1)^*$. The only non-zero cohomologies of the split bundles are
\[H^2(X,\B_1)\!=\!H^2(X,\B_2)\!=\SS_{(0,0,0,0)}W \oplus \SS_{(1,1,0,-2)} W \oplus \SS_{(1,1,-1,-1)}W \oplus \SS_{(2,0,0,-2)}W\oplus \SS_{(2,0,-1,-1)}W \oplus \SS_{(1,0,0,-1)}W^{\oplus 3},\] 
\[H^3(X,\B_1)=H^3(X,\B_2)=\SS_{(1,0,0,-1)}W.\]
The representation $\SS_{(1,0,0,-1)}W$ occurs both in degrees $2,3$ for both $\B_1,\B_2$, and we cannot conclude that cancellation holds using only the splitting method. We show in the next section that cancellation indeed holds.
\end{example}

We proceed with a result computing explicitly the cohomology for some hook partitions, which we use in the proof of Theorem \ref{thm:mingens}.

\begin{prop}\label{prop:hook}
Let $0<r_1<r_2<n$ and assume that  $\ll=(a+1, 1^b)$ with $0 < a \leq n-r_2$ and $0\leq b<r_2$. In the following cases, the non-zero cohomology groups of $\SS_\ll \R_2 \oo \SS_\mu (W/\R_1)^*$ are given as follows:
\begin{enumerate}
\item If $a < n-r_2$, then all cohomology groups vanish except for the following, when $b < r_1$:
\[H^{N(-\mu, \ll)}(\SS_\ll \R_2 \oo \SS_\mu (W/\R_1)^*) = S_{\tau(-\mu,\, \ll)} W.\]
\item If $a=n-r_2$, and $\mu = (k)$ with $k\geq 1$, then the following are all the irreducible representations of $\GL(n)$ that are summands of  $H^i(\,\SS_\ll \R_2 \oo \SS_{(k)} (W/\R_1)^*\,)$ (in which case they have multiplicity one): \smallskip
\begin{itemize}
\item[•] $\SS_{(1^a)}W$, \, when $i=a+k-1$, \, $k=b+1$ and $b<r_1$; \medskip
\item[•] $\SS_{(1^{a+b-r_1})}W$, \, when $i=a+r_1$, \, $k=r_1+1$ and $r_1\leq b$; \medskip
\item[•] 
$\SS_{(1^{a+1+j},\,0^{b+r_2-r_1-2j-2}, \,-1^{r_1+j-b},\,r_1-k)}W$, \, for all $j$ with \, $\max\{0, \,b-r_1\}\leq j \leq \min\{b, \, r_2 - r_1 -2\}$, \smallskip \\ when $i=a+r_1$ and $k>r_1$;\medskip
\item[•] $\SS_{(1^{a+j},\,0^{b+r_2-r_1-2j-1}, \,-1^{r_1+j-b},\,r_1-k+1)}W$, \, for all $j$ with \, $\max\{0, \,b-r_1\}\leq j \leq \min\{b, \, r_2 - r_1 -1\}$, \smallskip \\ when $i=a+r_1$ and $k>r_1+1$.
\end{itemize}
\end{enumerate}
\end{prop}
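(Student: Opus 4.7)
The plan is to apply the splitting method of Section~\ref{sec:split} using the short exact sequence $0 \to \R_1 \to \R_2 \to \R_2/\R_1 \to 0$. The associated graded of the induced filtration on $\SS_\ll \R_2 \otimes \SS_\mu (W/\R_1)^*$ is $\SS_\ll(\R_1 \oplus \R_2/\R_1) \otimes \SS_\mu(W/\R_1)^*$, and since $\ll = (a+1,1^b)$ is a hook, its Littlewood--Richardson decomposition involves only hook (or degenerate row/column) summands $\SS_\nu \R_1 \otimes \SS_\gamma (\R_2/\R_1)$. Pushing forward along $\pi\colon \Fl(r_1,r_2,n) \to \Gr(r_1,n)$, the projection formula combined with the relative Borel--Weil--Bott identity~(\ref{eq:push}) replaces each $\SS_\gamma(\R_2/\R_1)$-factor by a single Schur functor of $W/\R_1$ on $\Gr(r_1,n)$; Theorem~\ref{thm:bott} then computes the remaining cohomology as an irreducible $\GL(n)$-representation.

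For case~(1): any hook $\gamma$ with $\gamma_1 \geq 1$ has Durfee size $u_\gamma = 1$, so the non-vanishing condition of~(\ref{eq:push}) becomes $\gamma_1 \geq n-r_2+1$. The hypothesis $a < n-r_2$ forces $\gamma_1 \leq a+1 \leq n-r_2$ for every hook appearing in the LR expansion, so every such summand pushes forward to zero. Only $\gamma = \emptyset$ survives, which pins $\nu = \ll$ and produces the summand $\SS_\ll \R_1 \otimes \SS_\mu(W/\R_1)^*$ pulled back from $\Gr(r_1, n)$; this vanishes unless $b+1 \leq r_1$. Theorem~\ref{thm:bott} then identifies its cohomology as $\SS_{\tau(-\mu,\ll)} W$ in degree $N(-\mu,\ll)$ whenever the weight $(-\mu,\ll)$ is regular, and zero otherwise. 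Because the split bundle $\B_1$ has cohomology concentrated in this single degree, the filtration spectral sequence for the original bundle collapses, and the cohomology of $\SS_\ll \R_2 \otimes \SS_\mu(W/\R_1)^*$ agrees.

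For case~(2): now $a = n-r_2$, so hooks $\gamma = (a+1, 1^t)$ with $0 \leq t \leq \min\{b,\, r_2-r_1-1\}$ satisfy the pushforward condition of~(\ref{eq:push}) with equality, contributing alongside $\gamma = \emptyset$. The specialization $\mu = (k)$ makes $\SS_\mu(W/\R_1)^*$ a symmetric power, so Pieri's rule combines cleanly with the single Schur functor of $W/\R_1$ produced by pushforward. Enumerating the LR-compatible pairs $(\nu, \gamma)$ with $\nu = (p, 1^s)$ a hook, and feeding the resulting weights into Theorem~\ref{thm:bott} on $\Gr(r_1, n)$, produces the four listed families: bullet~1 records the $\gamma = \emptyset$ (``case-(1)-type'') contribution, which demands $k = b+1$ for the Bott weight to be regular; bullets~2--4 record the new contributions coming from $\gamma = (a+1, 1^t)$, with the index $j$ encoding the split $s = j$ of the column part of $\ll$ between $\nu$ and $\gamma$, and the stated ranges of $j$ dictated by admissibility of $(p,s)$ and regularity of the final Bott weight.

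The main obstacle is ruling out cancellations in case~(2). Unlike case~(1), several summands of $\B_1$ now contribute in (a priori) different cohomological degrees, so the filtration spectral sequence could in principle cancel identical irreducibles against each other, as illustrated by Example~\ref{ex:later}. The plan is to compute both split bundles $\B_1$ and $\B_2$ of~(\ref{eq:sequence}) and match them term by term; Lemma~\ref{lem:symm}, applied to the dual flag variety, provides an independent consistency check by swapping the hook $\ll$ with the single-row $\mu$. The rigidity of the hook shape of $\ll$ and of the one-row shape of $\mu$ sharply restricts the possible cross-cancellations, and a direct inspection shows that each listed irreducible appears in a single degree with multiplicity one and no cancellation can occur.
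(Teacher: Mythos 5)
Your approach in case (1) matches the paper's exactly: split with $\B_1$, observe that every nonempty hook $\gamma\subset\ll$ has $\gamma_1\le a+1\le n-r_2$ and Durfee size $1$, so the relative pushforward kills all summands with $\gamma\neq\emptyset$, leaving only $\SS_\ll\R_1\oo\SS_\mu(W/\R_1)^*$ on $\Gr(r_1,n)$, to which Borel--Weil--Bott applies directly. That part is fine.

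Case (2) has two genuine problems. First, the attribution of bullet~1 to the $\gamma=\emptyset$ summand is wrong. When $a=n-r_2$, the summand $\SS_\ll\R_1\oo\SS_{(k)}(W/\R_1)^*$ on $\Gr(r_1,n)$ has Bott weight $\delta=(0^{n-r_1-1},-k,a+1,1^b,0^{r_1-b-1})$, and $\delta+\rho$ takes the value $a+r_1=n-r_2+r_1$ at position $n-r_1+1$, which always collides with one of the values $n-1,n-2,\dots,r_1+1$ coming from the leading block of zeros (since $r_1+1\le a+r_1\le n-1$). So the weight is singular and $\gamma=\emptyset$ contributes nothing; bullet~1 in fact comes from $\gamma=(a+1)$, i.e.\ $j=0$, via the $\SS_{(1^{a+j},\,0^{r_2-r_1-j-1},\,1-k)}W/\R_1$ piece of the Pieri split of $\SS_{(k)}(W/\R_1)^*\oo\SS_{(1^{a+j+1})}W/\R_1$.

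Second, and more serious, the claim that ``no cancellation can occur'' is not correct, and the cancellation is precisely what makes case~(2) nontrivial. Working out $\B_1$ in full, the representation $\SS_{(1^{a+b-k+1})}W$ appears with multiplicity one in degree $a+k$ whenever $\max\{1,\,b-r_2+r_1+1\}\le k\le\min\{b,r_1\}$, yet this representation is \emph{not} on the list in the proposition for those values of $k$. So the proof has to show that this class dies in the spectral sequence. Saying ``match $\B_1$ and $\B_2$ term by term'' cannot settle this by itself --- Example~\ref{ex:later} in the paper is precisely a warning that matching cohomologies of the two split bundles leaves the cancellation question open. The argument the paper uses is sharper: for $k\le\min\{b,r_1\}$, apply Lemma~\ref{lem:symm} to pass to $\Fl(n-r_2,n-r_1,n)$ where $\mu=(k)$ plays the role of $\ll$; since $k-1<r_1=n-r_2'$, part~(1) applies and forces all cohomology of the original bundle into a single degree, which one then checks is $N(0^{a-1},-k,a+1,1^b,0^{r_2-b-1})=-\infty$ because $1\le k\le b$ makes that weight singular. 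Hence the entire cohomology vanishes in that range of $k$, and the unwanted class in $\B_1$ indeed cancels. This step is absent from your proposal, so as written the proof does not establish part~(2).
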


\begin{proof}
For part (1) we use the split bundle $\B_1$. By the Littlewood--Richardson rule, a summand of $\SS_\ll (\R_1 \oplus \R_2/\R_1)$ is of the form $\SS_{(x,1^y)} (\R_2/\R_1) \oo \SS_{(z,1^t)} \R_1$ with $x\leq a+1 \leq n-r_2$. If $x>0$, then using (\ref{eq:push}) we see that $\mathbb{R}^\bullet \pi_* (\SS_{(x,1^y)}(\R_2/\R_1))=0$, so the summand does not give cohomology. Hence, cohomology can occur only for $x=0$, and when we get 
\[H^i(\,\SS_\ll \R_2 \oo \SS_\mu (W/\R_1)^*)=H^{i}(\,\SS_\ll \R_1 \oo \SS_\mu (W/\R_1)^*),\]
for all $i\geq 0$. We conclude by Theorem \ref{thm:bott} again.

For part (2), we first use the split bundle $\B_1$ again. As in the computation above, we see that the only summand of $\SS_\ll (\R_1 \oplus \R_2/\R_1)$ that can yield cohomology is of the form $\SS_{(a+1,1^{j})} \R_2/\R_1 \oo \SS_{(1^{b-j})} \R_1$, for $j$ satisfying $\max\{0, \,b-r_1\}\leq j \leq \min\{b, \, r_2 - r_1 -1\}$. The pushforward as in (\ref{eq:push}) is $\SS_{(1^{a+j+1})} W/\R_1$. By the Pieri rule, we have
\[\SS_{(k)}(W/\R_1)^* \oo \SS_{(1^{a+j+1})} W/\R_1 \cong \SS_{(1^{a+j+1},\,0^{r_2-r_1-j-2},\, -k)}W/\R_1 \oplus \SS_{(1^{a+j},\,0^{r_2-r_1-j-1},\, 1-k)}W/\R_1,\] 
where the first summand is zero for $j=r_2-r_1-1$. The bundle $ \SS_{(1^{a+j+1},\,0^{r_2-r_1-j-2},\, -k)}W/\R_1 \oo \SS_{(1^{b-j})} \R_1$ gives non-zero cohomology if and only if either $b-j=k$ or $k\geq r_1+1$, and in these cases we get the cohomology $\SS_{(1^{a+b-k+1})}W$ and $\SS_{(1^{a+1+j},\,0^{b+r_2-r_1-2j-2}, \,-1^{r_1+j-b},\,r_1-k)}W$ in degrees $a+k$ and $a+r_1$, respectively. Similarly, the bundle $ \SS_{(1^{a+j},\,0^{r_2-r_1-j-1},\, 1-k)}W/\R_1 \oo \SS_{(1^{b-j})} \R_1$ gives non-zero cohomology if and only if either $b-j=k-1$ or $k\geq r_1+2$, and in these cases we get the cohomology $\SS_{(1^{a+b-k+1})}W$ and $\SS_{(1^{a+j},\,0^{b+r_2-r_1-2j-1}, \,-1^{r_1+j-b},\,r_1-k+1)}W$ in degrees $a+k-1$ and $a+r_1$, respectively.

Now replacing back the split bundle $\B_1$ with $\SS_\ll \R_2 \oo \SS_\mu (W/\R_1)^*$, the only potential cancelations between the cohomology obtained above is for the representation $\SS_{(1^{a+b-k+1})}W$ that can appear both in degrees $a+k$ and $a+k-1$. Whenever $\SS_{(1^{a+b-k+1})}W$ appears in degree $a+k$, we must have $k\leq \min\{b,\, r_1\}$, while in degree $a+k-1$ it can appear for $k=b+1$ or $k=r_1+1$. In order to finish the proof, it is enough to show that if $k\leq \min\{b,\, r_1\}$, then the bundle $\SS_\ll \R_2 \oo \SS_{(k)} (W/\R_1)^*$ has no non-zero cohomology groups.

Hence, we can assume $k\leq \min\{b,\, r_1\}$ and consider now the split bundle $\B_2$. Since $k\leq r_1$, by part (1) and Lemma \ref{lem:symm}, the only non-zero cohomology can appear in degree $N(0^{a-1},\, -k,\, a+1,\, 1^b, \, 0^{r_2-b-1})$. But since $k\leq b$, we have $N(0^{a-1},\, -k,\, a+1,\, 1^b, \, 0^{r_2-b-1})=-\infty$, thus yielding the claim.
\end{proof}

All cohomology above is concetrated in a single degree, and experiments show that this happens frequently (see also Proposition \ref{prop:lowcases}). Nevertheless, it does not happen always, as illustrated by the following example.

\begin{example}\label{ex:notsingle}
Consider the bundle $\SS_{(4,4)} \R_2 \oo \SS_{(2,0)} (W/\R_1)^*$ on $\Fl(1,2,3)$. Then the following are the only nonvanishing cohomology groups
\[H^2(\SS_{(4,4)} \R_2 \oo \SS_{(2,0)} (W/\R_1)^*)=\SS_{(3,3,0)} W, \quad H^3(\SS_{(4,4)} \R_2 \oo \SS_{(2,0)} (W/\R_1)^*)=\SS_{(2,2,2)} W.\]
\end{example}

\subsection{Refinements via Schur complexes}\label{sec:refineschur}

Consider an exact sequence $0 \to A \to B \to C\to 0$ of vector spaces (or locally free sheaves). Then for any partition $\ll$, the following is a right resolution of the module $\SS_\ll A$:

\begin{equation}\label{eq:schur1}
\SS_\ll B \to \bigoplus_{\substack{|\mu|=|\ll|-1 \\ |\nu|=1}} (\SS_{\mu} B \oo \SS_{\nu'} C)^{\oplus c^\ll_{\mu,\nu}}  \to \dots \to \bigoplus_{\substack{|\mu|=|\ll|-i \\ |\nu|=i}} (\SS_{\mu} B \oo \SS_{\nu'} C)^{\oplus c^\ll_{\mu,\nu}} \to \dots \to \SS_{\ll'} C \to 0.
\end{equation}
Here $c^\ll_{\mu,\nu}$ denotes the Littlewood--Richardson coefficient corresponding to the partitions $\ll,\mu,\nu$. The maps in the complex can be constructed explicitly (see \cite{schur} or \cite[Section 2.4]{jerzy} for more about Schur complexes).


The idea is to consider the complex (\ref{eq:schur1}) for either one of the following exact sequences:
\[0 \to \R_2 \to W \to W/\R_2\to 0, \mbox{ resp. } 0 \to (W/\R_1)^* \to W^* \to \R_1^* \to 0. \]
Let us explain this in the first case. Considering the respective Schur complex (\ref{eq:schur1}), we get a right resolution of the module $\SS_{\ll} \R_2$. The syzygies of this complex can be analyzed directly in many situations, giving complementary  information from the one obtained by the splitting method in Section \ref{sec:split}.

\begin{example}\label{ex:latnow}
Here we finish Example \ref{ex:later} to show that all the cancelations hold indeed, so that the cohomology of $\SS_{(3,1,0)}\R_2 \oo \SS_{(3,1,0)} (W/\R_1)^*$ on $\Fl(1,3,4)$ is concentrated in degree $2$. Consider the Schur complex as the resolution of $\SS_{(3,1,0)}\R_2$:
\[0\to \SS_{(3,1,0)}\R_2 \to \SS_{(3,1,0,0)} W \to (\SS_{(2,1,0,0)} W \oplus \SS_{(3,0,0,0)} W) \oo (W/\R_2) \to \SS_{(2,0,0,0)} W \oo \SS_{(2)} (W/\R_2) \to 0.\]
Let us analyze the middle syzygy $K$ of the complex above. The end of the sequence can be interpreted as
\[0\to K \to \SS_{(2,0,0,0)} W \oo W \oo (W/\R_2) \to \SS_{(2,0,0,0)} W \oo (W/\R_2) \oo (W/\R_2)\to 0,\]
where the last map is induced from the projection $W\to W/\R_2$. Hence, $K\cong \SS_{(2,0,0,0)}W \oo \R_2 \oo (W/\R_2)$ and we have an exact sequence
\begin{multline}\label{eq:long}
0 \to \SS_{(3,1,0)}\R_2 \oo \SS_{(3,1,0)} (W/\R_1)^* \to \SS_{(3,1,0,0)} W \oo \SS_{(3,1,0)} (W/\R_1)^* \to \\
\to \SS_{(2,0,0,0)}W \oo \R_2 \oo W/\R_2 \oo \SS_{(3,1,0)} (W/\R_1)^* \to 0.
\end{multline}
Now we take the induced the long exact sequence of cohomology, and see that the second and third bundles (by replacing the bundle $\R_2$ with the split bundle $\R_1 \oplus (\R_2/\R_1)$) above give cohomology only in degree $1$, hence $\SS_{(3,1,0)}\R_2 \oo \SS_{(3,1,0)} (W/\R_1)^*$ has no cohomology in degree $>2$.
\end{example}

We can tensor the right resolution of $\SS_{\ll} \R_2$ above by $\SS_\mu (W/\R_1)^*$, to obtain a right resolution of $\SS_\ll \R_2 \oo \SS_\mu (W/\R_1)^*$. Note that the terms of the resolution involve only Schur functors of the bundles $W, (W/\R_1)^*$ and $W/\R_2$. Hence, as in (\ref{eq:proj}) by the projection formula and the (relative) Borel-Weil-Bott theorem, we see that the resolution is $\pi_*$-acyclic and we obtain the following complex on $\Gr(r_1,n)$, tensored by $\SS_\mu (W/\R_1)^*$:
\begin{equation}\label{eq:trunc}
\SS_\ll W \to \!\!\!\!\!\!\!\!\!\! \bigoplus_{\substack{|\mu|=|\ll|-1 \\ |\nu|=1,\, \nu_1 \leq n-r_2}}\!\!\!\!\!\!\!\!\!\! (\SS_{\mu} W \oo \SS_{\nu'} (W/\R_1))^{\oplus c^\ll_{\mu,\nu}}  \to \dots \to \!\!\!\!\!\!\!\!\!\! \bigoplus_{\substack{|\mu|=|\ll|-i \\ |\nu|=i, \, \nu_1 \leq n-r_2}} \!\!\!\!\!\!\!\!\!\! (\SS_{\mu} W \oo \SS_{\nu'} (W/\R_1))^{\oplus c^\ll_{\mu,\nu}} \to \dots \to \SS_{\ll'} (W/\R_1) \to 0. \vspace{-0.05in}
\end{equation}

This is a truncated Schur complex as it appears in \cite[Chapter 2, Exercise 22]{jerzy}. Taking hypercohomology of the complex above tensored by $\SS_\mu (W/\R_1)^*$ yields a spectral sequence converging to the cohomology of the bundle $\SS_\ll \R_2 \oo \SS_\mu (W/\R_1)^*$. This can be harnessed effectively for gaining additional information.

\begin{example}\label{ex:truncschur}
Consider the bundle $\SS_{(4,1,0,0)}\R_2 \oo \SS_{(4,1,0,0)} (W/\R_1)^*$ on $\Fl(2,4,6)$. Using either split bundle
\begin{multline*} 
H^4(\B_i) = \SS_{(2,1,0,0,-1,-2)}W\oplus\SS_{(2,1,0,-1,-1,-1)}W\oplus\SS_{(2,0,0,0,-1,-1)}W\oplus\SS_{(1,1,1,0,-1,-2)}W\oplus\SS_{(1,1,1,-1,-1,-1)}W\oplus \\ \oplus\SS_{(1,1,0,0,0,-2)}W\oplus  (\SS_{(1,1,0,0,-1,-1)}W)^{\oplus 3} \oplus \SS_{(1,0,0,0,0,-1)}W,\quad  \mbox{ and } \quad H^5(\B_i) = \SS_{(1,1,0,0,-1,-1)}W.
\end{multline*}
We show using the complex (\ref{eq:trunc}) (tensored with $\SS_{(4,1,0,0)} (W/\R_1)^*$)  that the representation $\SS_{(1,1,0,0,-1,-1)}W$ in degree $5$ cancels out with a copy in degree $4$ (in particular, the cohomology is concentrated in degree $4$).  
By inspection, the last term in the complex is the only term that can give cohomology $\SS_{(1,1,0,0,-1,-1)}W$
 in degree $5$ through the spectral sequence. Hence, it is enough to see that the last map in the complex induces a surjective map on cohomology in degree $2$ on the level of the corresponding isotypic component. We show that this happens already when we restrict the last map in the complex to the summands (tensored with $\SS_{(4,1,0,0)} (W/\R_1)^*$)
 \[(\SS_{(3)} W \oplus \SS_{(2,1)}W) \oo \SS_{(1,1)} (W/\R_2) \to \SS_{(2,0)}W \oo \SS_{(2,1)}(W/\R_2).\]
Like in the previous example, this map can be interpreted in the following short exact sequence (for example, using Lemma \ref{lem:cat} (4) and the fact that on the level of the two summands the maps are non-zero)
 \[0\to \SS_{(2,0)}W \oo \R_2 \oo \SS_{(1,1)} (W/\R_2) \to \SS_{(2,0)}W \oo W \oo \SS_{(1,1)} (W/\R_2) \to \SS_{(2,0)}W \oo (W/\R_2)\oo \SS_{(1,1)}(W/\R_2) \to 0.\]
Replacing $\R_2$ with $\R_1 \oplus (\R_2/\R_1)$, it follows easily that $H^3(\SS_{(2,0)}W \oo \R_2 \oo \SS_{(1,1)} (W/\R_2) \oo \SS_{(4,1,0,0)} (W/\R_1)^*)=0$.
\end{example}

By a case-by-case analysis (using also Lemma \ref{lem:symm} and Proposition \ref{prop:hook}), we see that the splitting method together with refinements as above yields the following result.
\begin{prop}\label{prop:lowcases}
When $n\leq 4$ and $\ll_1,\mu_1\leq 3$, the cohomology groups $\SS_\ll \R_2 \oo \SS_\mu (W/\R_1)^*$ on $\Fl(r_1,r_2,n)$ are concentrated in a single degree, and a fortiori computable using the splitting method.
\end{prop}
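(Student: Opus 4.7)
The plan is to carry out a finite but careful case analysis, using the tools assembled in Sections \ref{sec:split} and \ref{sec:refineschur}, and reducing the number of cases as much as possible via the symmetries available. First, by Lemma \ref{lem:symm}, the bundle $\SS_\ll \R_2 \oo \SS_\mu (W/\R_1)^*$ on $\Fl(r_1,r_2,n)$ has the same cohomology (up to duality) as $\SS_\mu \R_2 \oo \SS_\ll (W/\R_1)^*$ on $\Fl(n-r_2, n-r_1, n)$, so we may assume, say, $r_1 \leq n-r_2$, which cuts the list of flag varieties roughly in half. We may also swap the roles of $\ll$ and $\mu$ in this fashion, so up to symmetry only one of the two partitions needs to be examined. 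Next, the boundary cases $r_1=0$, $r_2=n$, or $r_1=r_2$ all reduce $\Fl(r_1,r_2,n)$ to a single Grassmannian (or a point), on which Theorem \ref{thm:bott} directly places the cohomology in one degree. This isolates the genuinely nontrivial flag varieties with $n\leq 4$: namely $\Fl(1,2,3)$, $\Fl(1,2,4)$, $\Fl(1,3,4)$ and $\Fl(2,3,4)$ (the last being equivalent to the second by symmetry).

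Second, I would dispose of all hook partitions at once by appealing to Proposition \ref{prop:hook}: whenever $\ll$ (or, by Lemma \ref{lem:symm}, $\mu$) is a hook of the form $(a+1,1^b)$, the proposition lists the non-zero cohomology groups explicitly and inspecting its statement shows that under the constraint $\ll_1, \mu_1 \leq 3$ (hence $a\leq 2$) all the summands listed do land in a single cohomological degree. Combined with the earlier reduction, this handles a substantial portion of the pairs $(\ll,\mu)$ with $\ll_1,\mu_1\leq 3$ and at most $n\leq 4$ parts.

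Third, for the remaining pairs one enumerates the finitely many $(\ll,\mu)$ with $\ll_1,\mu_1\leq 3$ and number of parts bounded by the flag data, and in each case applies the splitting method from Section \ref{sec:split} to both split bundles $\B_1$ and $\B_2$. By the discussion at the end of Section \ref{sec:split}, the actual cohomology of $\SS_\ll \R_2 \oo \SS_\mu (W/\R_1)^*$ is a subquotient of each of $H^\bullet(X,\B_i)$, so whenever the two computations of $H^\bullet(X,\B_1)$ and $H^\bullet(X,\B_2)$ (each executed via (\ref{eq:proj})--(\ref{eq:push}) and Theorem \ref{thm:bott}) already place all irreducibles in a common single degree $d$, we are done. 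In the examples in Section \ref{sec:split} one sees that this already succeeds in most instances, with the compatibility between $\B_1$ and $\B_2$ forcing the necessary cancellations.

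The main obstacle is exactly the leftover cases, those in which an irreducible $\SS_\gamma W$ appears in two consecutive degrees of both $H^\bullet(X,\B_1)$ and $H^\bullet(X,\B_2)$, so that the splitting method alone cannot conclude. For these, I would apply the truncated Schur complex refinement of Section \ref{sec:refineschur} exactly as in Examples \ref{ex:latnow} and \ref{ex:truncschur}: write down the complex (\ref{eq:trunc}) tensored with $\SS_\mu(W/\R_1)^*$, identify which term(s) of the complex can possibly contribute the offending representation in the higher degree via the resulting spectral sequence, and verify by inspection (using the interpretation of the syzygy as $\SS_\alpha W \oo \R_2 \oo (W/\R_2)\oo \SS_\mu(W/\R_1)^*$-type bundles) that the relevant map on cohomology is surjective, forcing the cancellation. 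Because $n\leq 4$ and $\ll_1,\mu_1\leq 3$, the truncated Schur complex has only a handful of terms, so each ambiguous case involves a very small linear-algebra check; the genuinely finite bound on the list of offenders is what makes a case-by-case verification feasible. Once every residual case is dismissed this way, the conclusion that the cohomology is concentrated in a single degree — and hence \emph{a fortiori} computable by the splitting method — follows.
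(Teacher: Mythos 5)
Your strategy is essentially the paper's own: the paper's proof of Proposition~\ref{prop:lowcases} is a one-line remark that the result follows from a case-by-case analysis combining Lemma~\ref{lem:symm}, Proposition~\ref{prop:hook}, the splitting method of Section~\ref{sec:split}, and the Schur-complex refinements of Section~\ref{sec:refineschur}, and that is precisely the toolkit you deploy, in the same order and with the same division of labor. Your preliminary reductions (using Lemma~\ref{lem:symm} to assume $r_1\leq n-r_2$, disposing of the boundary cases $r_1=0$, $r_2=n$, $r_1=r_2$ by Theorem~\ref{thm:bott}, and identifying $\Fl(1,2,3)$, $\Fl(1,2,4)$, $\Fl(1,3,4)$ as the genuinely two-step cases with $\Fl(2,3,4)$ dual to $\Fl(1,2,4)$) are sensible bookkeeping that the paper leaves implicit.

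One imprecision worth correcting: you claim that Proposition~\ref{prop:hook} ``disposes of all hook partitions at once.'' It does not: part~(1) of that proposition handles a hook $\ll$ with $a<n-r_2$ for arbitrary $\mu$, but part~(2) handles $a=n-r_2$ only for $\mu$ a single row $(k)$. So when $\ll$ is a hook with $\ll_1-1 = n-r_2$ and $\mu$ has more than one nonzero part, Proposition~\ref{prop:hook} is silent and you still need the splitting method and/or the truncated Schur complex. Relatedly, your suggestion that by Lemma~\ref{lem:symm} ``only one of the two partitions needs to be examined'' is only true when $r_1 = n-r_2$; in general the lemma exchanges $\ll\leftrightarrow\mu$ together with the flag data, so for a fixed flag variety you cannot swap the partitions independently. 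Neither slip breaks the proof, since your later steps (split bundles $\B_1,\B_2$ plus the truncated Schur complex as in Examples~\ref{ex:latnow} and~\ref{ex:truncschur}) cover the cases that Proposition~\ref{prop:hook} misses, but the framing should be adjusted so that Proposition~\ref{prop:hook} is advertised only for the subset of hook pairs it actually treats.
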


In principle, the Schur complexes above can be tracked in the category of homogeneous vector bundles (or $\GL(n)$-bundles) over the Grassmannian. The latter can be described using a quiver with relations, with vertices in bijection with the simple objects $\SS_\a \R \oo \SS_\b \Q$ \cite{bonkap, hille, ottaviani}. Although in general the method can get quite complicated, it gives good control of the first page of the spectral sequence associated to the truncated Schur complex, which we illustrate by an example.

First, we gather some convenient facts about the bundles that appear in our complexes. Recall the notions of radical and socle filtrations (e.g. \cite[Section V.1]{elements}). When these filtrations coincide, the corresponding object is sometimes called rigid in the literature. This is the case for the bundles $\M_{\mu,\nu}:= \SS_{\mu} W \oo \SS_{\nu} \Q$.

\begin{lemma}\label{lem:cat} The following statements hold in the category of $\GL(n)$-bundles over $\Gr(r,n)$:
\begin{enumerate}
\item $\M_{\mu,\nu}$ has an ascending filtration $F^\bullet \M_{\mu,\nu}$ with 
\[\Gr^i \M_{\mu,\nu} := F^i\M_{\mu,\nu}/F^{i-1}\M_{\mu,\nu} \cong \displaystyle\bigoplus_{\substack{\alpha,\beta\\ |\alpha| = |\mu|-i}} (\SS_{\alpha} \R \oo \SS_{\beta} \Q  \oo \SS_{\nu} \Q)^{\oplus c^{\mu}_{\alpha,\beta}}.\]
\vspace{-0.13in}
\item The radical and socle filtrations of $\M_{\mu,\nu}$ agree with $F^\bullet \M_{\mu,\nu}$ (up to a shift).
\item $\End(\M_{\mu,\nu})=\kk$.  In particular, $\M_{\mu,\nu}$ is indecomposable.
\item Let $\a, \b$ partitions with $|\a| = |\mu|-1$ and $|\b|=|\nu|+1$. Then $\dim_\kk \Hom(\M_{\mu,\nu}, \,\M_{\alpha, \beta})\leq 1$ with equality if and only if $\a\subset \mu$ and $\nu\subset \b$, in which case the non-zero map is (up to scaling) the composition
\[\M_{\mu,\nu} \hookrightarrow \M_{\alpha, \nu} \oo W \twoheadrightarrow  \M_{\alpha, \nu} \oo \Q \twoheadrightarrow \M_{\alpha, \beta},\]
where the first and last maps are given by the Pieri rule (e.g. \cite[Corollary 2.3.5]{jerzy}).
\end{enumerate}
\end{lemma}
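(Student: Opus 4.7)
The plan is to prove the four parts in order, using (1) as the structural foundation for the rest.

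For (1), I apply the Schur functor $\SS_\mu$ to the tautological short exact sequence $0 \to \R \to W \to \Q \to 0$ via the classical construction in \cite[Chapter 2]{jerzy}, producing an ascending filtration on $\SS_\mu W$ whose associated graded is the Cauchy decomposition $\bigoplus c^\mu_{\alpha,\beta}\, \SS_\alpha \R \otimes \SS_\beta \Q$ indexed by $i = |\mu|-|\alpha|$, and then tensor with the factor $\SS_\nu \Q$. For (2), I appeal to the equivalence (Bondal--Kapranov; cf. \cite{bonkap, hille, ottaviani}) between $\GL(n)$-equivariant coherent sheaves on $\Gr(r,n)$ and representations of an explicit quiver with relations having simple objects $\SS_\alpha \R \otimes \SS_\beta \Q$; a direct Borel--Weil--Bott computation on a suitable relative flag variety shows that $\Ext^1$ between two distinct such simples vanishes unless they differ by moving a single box from $\R$ to $\Q$. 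Since the filtration $F^\bullet$ is assembled by iterating exactly these box moves from the bottom layer $\SS_\mu \R \otimes \SS_\nu \Q$, it coincides with the socle filtration; dually it agrees with the radical filtration.

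For (3), the Littlewood--Richardson rule gives the composition multiplicity of a simple $\SS_\alpha \R \otimes \SS_\pi \Q$ in $\M_{\mu,\nu}$ as $\sum_\beta c^\mu_{\alpha,\beta}\, c^\pi_{\beta,\nu}$; for $\alpha = \mu$ this forces $\beta = \emptyset$ and $\pi = \nu$, yielding multiplicity exactly one. An equivariant endomorphism $\phi$ then preserves the socle filtration from (2) and acts as a scalar $c$ on the simple socle $F^0 = \SS_\mu\R \otimes \SS_\nu\Q$; the difference $\phi - c\cdot \mathrm{id}$ kills $F^0$, so it factors through $\M_{\mu,\nu}/F^0 \to \M_{\mu,\nu}$, and its image has socle contained in the simple $F^0$. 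Since $\M_{\mu,\nu}/F^0$ contains no composition factor isomorphic to $F^0$ (by the multiplicity-one property), the image has trivial socle and must vanish, yielding $\End(\M_{\mu,\nu}) = \kk$ and indecomposability.

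For (4), I first show that the displayed composition is non-zero under $\alpha \subset \mu$ and $\nu \subset \beta$: each of the three constituent maps (Pieri inclusion, projection $W \twoheadrightarrow \Q$, Pieri projection) is non-zero, and tracing weights on simple sub-factors establishes that the composite itself does not vanish. For the upper bound, apply $\Hom(-, \M_{\alpha,\beta})$ to $0 \to F^0 \to \M_{\mu,\nu} \to \M_{\mu,\nu}/F^0 \to 0$: since $F^0 = \SS_\mu\R \otimes \SS_\nu\Q$ is not a composition factor of $\M_{\alpha,\beta}$ (as $|\mu| > |\alpha|$), the left-exact sequence gives an injection $\Hom(\M_{\mu,\nu}, \M_{\alpha,\beta}) \hookrightarrow \Hom(\M_{\mu,\nu}/F^0, \M_{\alpha,\beta})$. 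By repeating the socle argument of (3) -- any map in the latter is determined by its restriction to $\mathrm{soc}(\M_{\mu,\nu}/F^0) = F^1/F^0$, because $\SS_\alpha \R$ appears in $\M_{\mu,\nu}$ only inside $F^1$, so a vanishing restriction forces the image to be zero -- the dimension is bounded above by the multiplicity of the simple $\SS_\alpha \R \otimes \SS_\beta \Q$ inside $F^1/F^0$, which by the Pieri rule equals $c^\mu_{\alpha,(1)}\, c^\beta_{(1),\nu} \in \{0,1\}$, equal to $1$ iff $\alpha \subset \mu$ and $\nu \subset \beta$, and $0$ otherwise.

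The main obstacle throughout is part (2): establishing that $\Ext^1$ between distinct equivariant simples on $\Gr(r,n)$ is supported precisely on the single-box moves. This is the substantive technical input and requires either a careful direct Borel--Weil--Bott computation or appeal to the explicit quiver-with-relations presentation from \cite{bonkap, hille, ottaviani}; granting it, parts (3) and (4) become exercises in socle/filtration formalism and Littlewood--Richardson bookkeeping.
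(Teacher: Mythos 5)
Your proposal follows essentially the same route as the paper's (very terse) proof: the Cauchy-type filtration on $\SS_\mu W$ induced by $0 \to \R \to W \to \Q \to 0$ for (1), Borel--Weil--Bott together with the identification $\Ext^i(\L_1,\L_2) \cong H^i(\L_1^*\oo\L_2)^{\GL(n)}$ to control $\Hom$ and $\Ext^1$ between equivariant simples for (2)--(4), Littlewood--Richardson/Pieri bookkeeping for multiplicities, and the Pieri (Olver) composition for the explicit nonzero map in (4). One small point worth tightening: in (2), the statement that $\Ext^1$ between simples is supported on single-box moves does not by itself pin down the socle of $\M_{\mu,\nu}$ (a priori a simple from a higher layer could split off); you should first establish $\mathrm{soc}(\M_{\mu,\nu})=\SS_\mu\R\oo\SS_\nu\Q$ by a direct $\Hom$ computation via BWB (an $\Ext^0$ statement, not $\Ext^1$), after which your inductive box-move argument does give the full socle filtration and the argument for (3) goes through as you wrote it without any circularity.
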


\begin{proof}
The claims follow by the repeated application of the Littlewood--Richardson rule, the Borel--Weil--Bott theorem and the natural isomorphism
\begin{equation}\label{eq:ext}
\Ext^i(\L_1, \L_2) \cong H^i(\L_1^*\oo \L_2)^{\GL(n)}, \qquad \mbox{ for all } i\geq 0,
\end{equation}
where $\L_1, \L_2$ are arbitrary $\GL(n)$-bundles over $\Gr(r,n)$. The map in part (4) is a variant of the Olver map (e.g. see \cite[Section 8]{ottaviani}.
\end{proof}

The combination of all the methods discussed yields a potent weapon for tackling more complicated cases.

\begin{example}\label{ex:quiver}
Consider the bundle $\SS_{(4,1,1,0)}\R_2 \oo \SS_{(4,1,1,0)} (W/\R_1)^*$ on $\Fl(2,4,6)$. Using either split bundle, we obtain (compare with Example \ref{ex:later})
\[H^4(\B_i) = \SS_{(2,0,0,0,0,-2)}W\oplus \SS_{(2,0,0,0,-1,-1)}W\oplus\SS_{(1,1,0,0,0,-2)}W\oplus\SS_{(1,1,0,0,-1,-1)}W\oplus  (\SS_{(1,0,0,0,0,-1)}W)^{\oplus 3} \oplus \SS_{(0^6)}W,\]
\[H^5(\B_i) = \SS_{(1,0,0,0,0,-1)}W.\]
We show that the representation $\chi:=\SS_{(1,0,0,0,0,-1)}W$ in degree $5$ cancels out (so cohomology is concentrated in degree $4$).  
By inspection, a single summand in the third term of the complex (\ref{eq:trunc}) is the only one that can give cohomology $\chi$
 in degree $5$ through the spectral sequence. Consider the following non-zero map (see Lemma \ref{lem:cat} (4)) from the complex with target this summand (tensored with $\SS_{(4,1,1)} \Q^*$, where $\R = \R_1$)
 \[f: \quad \SS_{(2,1,1)} W \oo \SS_{(1,1)} \Q \longrightarrow \SS_{(2,1)}W \oo  \SS_{(2,1)} \Q.\]
Denote by $H^i_\chi(-)$ the isotypic component corresponding to $\chi$ of a cohomology group. The only non-vanishing such components of the two bundles in question are in degree $2$ with $\dim_\kk H^2_\chi(\M_{(2,1,1) , (1,1)} \oo \SS_{(4,1,1)} \Q^* ) =4$ and $\dim_\kk H^2_\chi(\M_{(2,1) , (2,1)}\oo \SS_{(4,1,1)} \Q^* ) = 2$. It is enough to show that $f$ induces a surjective map between these spaces. We consider the filtrations $F^\bullet \M_{(2,1,1) , (1,1)}$ and $F^\bullet \M_{(2,1) , (2,1)}$ as in Lemma \ref{lem:cat} (1). The only simples in the associated graded bundles that (tensored with $\SS_{(4,1,1)} \Q^*$) give cohomology $\chi$ are 
\[\SS_{(2,1)}\R \oo \SS_{(2,1)}\Q, \, \SS_{(2,1)}\R \oo \SS_{(1,1,1)}\Q, \, \SS_{(1,1)}\R \oo \SS_{(3,1)}\Q, \, \SS_{(1,1)}\R \oo \SS_{(2,2)}\Q, \, \SS_{(1,1)}\R \oo \SS_{(2,1,1)}\Q, \, \SS_{(1,1)}\R \oo \SS_{(1,1,1,1)}\Q.\]
The respective multiplicities of $\chi$ are $2,1,1,1,2,1$ in degrees $3,3,2,2,2,2$. The multiplicities of these bundles in $\M_{(2,1,1) , (1,1)}$ and $\M_{(2,1) , (2,1)}$ are $1,1,1,1,2,1$ and $1,0,1,1,1,0$, respectively. The following is the relevant part of the quiver of the category of $\GL(6)$-bundles, where $x_\a$ stands for a bundle $\SS_{\beta} \R \oo \SS_\a \Q$ as above:
\vspace{-0.1in}
\[\xymatrix@R-1pc{ x_{(3,1)}  \ar[dr] & x_{(2,2)} \ar[d] & x_{(2,1,1)} \ar[ld]  \ar[d] & x_{(1,1,1,1)} \ar[dl] \\
& x_{(2,1)}  & x_{(1,1,1)} & }\vspace{-0.1in}\]
The bundles $\M_{(2,1,1) , (1,1)}$ and $\M_{(2,1) , (2,1)}$ can be displayed in this part of the quiver as follows:
\vspace{-0.1in}
\[\vcenter{\xymatrix@R-0.5pc@C+0.7pc{ 
\kk  \ar[dr]_{a_1} & \kk \ar[d]^{a_2} & \kk^2 \ar[ld]^{a_3}  \ar[d]^{a_4} & \kk \ar[dl]^{a_5} \\
& \kk  & \kk & 
}} \qquad \mbox{ and } \qquad
\vcenter{\xymatrix@R-0.5pc@C+0.7pc{ 
\kk  \ar[dr]_{b_1} & \kk \ar[d]^{b_2} &\kk \ar[ld]^{b_3}  \ar[d] & 0 \ar[dl] \\
& \kk  & 0 & 
}}\vspace{-0.1in}\]
By Lemma \ref{lem:cat} (2) all the maps $a_i,b_i$ are nonzero. We see (e.g. using Lemma \ref{lem:cat} (4)) that the map induced at $x_{(2,1)}$ by $f$ is nonzero. Thus, $f$ induces surjective maps at each of the $x_\a$ as above (note $f$ preserves the filtrations by Lemma \ref{lem:cat}(2)). This shows that in the following diagram with exact rows, the maps $b,c$ are onto
\vspace{-0.12in}
\[
\xymatrix@C-1.2pc@R-0.6pc{0 \! \to \! H^2_\chi(F^1\!\M_{(2,1,1),(1,1)} \!\oo \! \SS_{(4,1,1)}\Q^*) \ar[r] \ar[d]^{a} & H^2_\chi(\Gr^1\!\! \M_{(2,1,1),(1,1)} \!\oo \! \SS_{(4,1,1)}\Q^*) \ar[r] \ar[d]^b & H^3_\chi(F^0\!\M_{(2,1,1),(1,1)} \!\oo \! \SS_{(4,1,1)}\Q^*) \ar[d]^{c} \! \to \! 0\\
0 \! \to \! H^2_\chi(F^1\!\M_{(2,1),(2,1)} \!\oo \SS_{(4,1,1)}\Q^*) \ar[r] & H^2_\chi(\Gr^1\!\! \M_{(2,1),(2,1)}\!\oo \SS_{(4,1,1)}\Q^*) \ar[r] & H^3_\chi(F^0\!\M_{(2,1),(2,1)} \!\oo \SS_{(4,1,1)}\Q^*) \! \to \! 0 }\vspace{-0.02in}
\]
We conclude that $a$ is onto by showing that $\ker b \to \ker c$ is so. It is enough to see the surjectivity of the map $H^2_\chi(\SS_{(1,1)}\R \oo \SS_{(1,1,1,1)}\Q\oo \SS_{(4,1,1}\Q^*) \to H^3_\chi(\SS_{(2,1)}\R \oo \SS_{(1,1,1)}\Q \oo \SS_{(4,1,1}\Q^*)$, or equivalently of the map $H^2(\SS_{(1,1)}\R \oo \SS_{(1,0,0,-3)}\Q) \to H^3(\SS_{(2,1)}\R \oo \SS_{(1,0,-1,-3)}\Q)$. A homological argument using (\ref{eq:ext}) shows that the non-trivial extension given by $a_5$ induces a non-trivial extension between $\SS_{(1,1)}\R \oo \SS_{(1,0,0,-3)}\Q$ and $\SS_{(2,1)}\R \oo \SS_{(1,0,-1,-3)}\Q$. The surjectivity of the required connecting map now follows by \cite[Theorem 6.6]{ottaviani}. 
\end{example}

\subsection{A definitive algorithm via Schur complexes}\label{sec:definschur}

For larger cases, the methods above become cumbersome to use in practice. In this section, we outline an algorithm to compute the cohomology groups of $\SS_\ll \R_2 \oo \SS_\mu (W/\R_1)^*$ on $\Fl(r_1,\,r_2,\,n)$ in general, reducing the problem to elementary linear algebra. This is done by constructing an explicit acyclic resolution of this bundle as follows.

Consider the resolution of both $\SS_\ll \R_2$ and $\SS_\mu (W/\R_1)^*$ by the respective Schur complexes (\ref{eq:schur1}). Tensoring the two complexes yields a double complex. Taking the total complex of this double complex, gives a resolution $Tot^\bullet$ of $\SS_\ll \R_2 \oo \SS_\mu (W/\R_1)^*$. 

\begin{prop}\label{prop:acyclic}
The complex $Tot^\bullet$ is an acyclic resolution of $\SS_\ll \R_2 \oo \SS_\mu (W/\R_1)^*$.
\end{prop}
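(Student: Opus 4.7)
The plan is to verify the two defining properties of an acyclic resolution. First, $Tot^\bullet$ must co-resolve $\SS_\ll \R_2 \oo \SS_\mu(W/\R_1)^*$. Since the two Schur complexes (\ref{eq:schur1}) built from the exact sequences $0 \to \R_2 \to W \to W/\R_2 \to 0$ and $0 \to (W/\R_1)^* \to W^* \to \R_1^* \to 0$ are exact co-resolutions consisting of locally free sheaves, tensoring one with the other preserves exactness in every row and column of the resulting double complex, and the standard spectral sequence for a double complex of flat objects collapses at the $E_2$-page to exhibit $Tot^\bullet$ as a co-resolution of $\SS_\ll \R_2 \oo \SS_\mu(W/\R_1)^*$.

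Second, each $Tot^p$ must be acyclic on $X = \Fl(r_1,r_2,n)$. Every summand is of the form $\SS_\alpha W \oo \SS_\beta W^* \oo \SS_{\nu'}(W/\R_2) \oo \SS_{\tau'}\R_1^*$ for partitions $\alpha,\beta,\nu,\tau$, so after pulling out the trivial factors $\SS_\alpha W$ and $\SS_\beta W^*$, it suffices to show that $\mathcal{E} := \SS_{\nu'}(W/\R_2) \oo \SS_{\tau'}\R_1^*$ is acyclic. For this I pass to the projection $\pi_2 : \Fl(r_1,r_2,n) \to \Gr(r_2,n)$ that forgets the $r_1$-dimensional subspace; its fibers are $\Gr(r_1,r_2)$, and on each fiber $W/\R_2$ restricts to a trivial bundle while the BWB weight $(0^{r_2-r_1}, -\tau'_{r_1}, \dots, -\tau'_1)$ attached to $\SS_{\tau'}\R_1^*$ is already non-increasing, so its cohomology is concentrated in degree $0$. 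The projection formula together with the relative Borel--Weil--Bott theorem then gives
\[R\pi_{2*}\mathcal{E} \,=\, \SS_\lambda \R_2 \oo \SS_{\nu'}(W/\R_2) \quad \text{in degree } 0,\]
with $\lambda = (0^{r_2-r_1}, -\tau'_{r_1}, \dots, -\tau'_1)$. By Leray it then remains to show that this pushforward is acyclic on $\Gr(r_2,n)$; applying Borel--Weil--Bott once more, the associated $\GL(n)$-weight is
\[\delta \,=\, (\nu'_1, \dots, \nu'_{n-r_2},\, 0^{r_2-r_1},\, -\tau'_{r_1}, \dots, -\tau'_1),\]
which is manifestly non-increasing because $\nu'$ and $\tau'$ are partitions, so $N(\delta) = 0$ and all higher cohomology vanishes.

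The main obstacle will be the careful bookkeeping of weights through the iterated pushforward and Borel--Weil--Bott computations. The key observation that makes both vanishings transparent is that the positive contributions from $\SS_{\nu'}(W/\R_2)$ and the non-positive contributions from $\SS_{\tau'}\R_1^*$ naturally occupy the left and right ends of $\delta$ respectively, with the block of zeros $0^{r_2-r_1}$ coming from $\R_2/\R_1$ acting as a buffer that preserves the non-increasing order in the middle.
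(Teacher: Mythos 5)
Your proof is correct and follows essentially the same strategy as the paper's: identify the summands of $Tot^\bullet$ as tensor products of $\GL(n)$-constants with $\SS_{\nu'}(W/\R_2) \oo \SS_{\tau'}\R_1^*$, and observe that the associated BWB weight $(\nu', 0^{r_2-r_1}, -\tau')$ is already non-increasing, so all higher cohomology vanishes. The paper applies Borel--Weil--Bott directly on the flag variety, while you factor through the projection $\pi_2 : \Fl(r_1,r_2,n) \to \Gr(r_2,n)$ and apply the relative statement plus Leray, which amounts to the same computation; you also spell out the standard flat-double-complex spectral-sequence argument showing $Tot^\bullet$ is a co-resolution, which the paper leaves implicit.
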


\begin{proof}
The terms of the complex $Tot^\bullet$ are all of the form $\SS_{\a}(W/\R_2) \oo \SS_{\b}\R_1^* \oo \SS_{\gamma} W$ for partitions $\a,\b$, and a non-increasing sequence $\gamma$. By the Borel--Weil--Bott theorem, we have
\begin{equation}\label{eq:cartan}
H^0(\,\SS_{\a}(W/\R_2) \oo \SS_{\b}\R_1^*\,) \cong \SS_{(\a, \, 0^{(r_2-r_1)},\,-\b)}W,
\end{equation}
and $H^i(\,\SS_{\a}(W/\R_2) \oo \SS_{\b}\R_1^*\,)=0$, for $i>0$. In particular, this shows that $Tot^\bullet$ is an acyclic resolution of the bundle $\SS_\ll \R_2 \oo \SS_\mu (W/\R_1)^*$, as required. 
\end{proof}

Now $H^0(Tot^\bullet)$ gives a complex of vector spaces with explicit terms (\ref{eq:cartan}) (tensored with $\SS_\gamma W$). The maps in $H^0(Tot^\bullet)$ are explicit as well: they can be obtained from the tensor product of maps of (truncated) Schur complexes, restricted to the Cartan piece (\ref{eq:cartan}). This follows since the constructions are functorial and the maps involved are induced by taking sections in commutative diagrams of the following type
\[\xymatrix@R-0.8pc{ \SS_{\nu}W \oo \SS_{\a} W \oo \SS_{\b_1} W^* \oo \SS_{\nu_1}W^* \ar[r]\ar[d] & \ar[d] \SS_{\nu}W \oo \SS_{\a} W \oo \SS_{\b_2} W^* \oo \SS_{\nu_2}W^* \\
\SS_{\nu}W \oo \SS_{\a} (W/\R_2) \oo \SS_{\b_1} \R_1^* \oo \SS_{\nu_1}W^* \ar[r] & \SS_{\nu}W \oo \SS_{\a} (W/\R_2) \oo \SS_{\b_2} \R_1^* \oo \SS_{\nu_2}W^*}\]

Once the differentials of $H^0(Tot^\bullet)$ are realized as above, we can recover the cohomology as a representation of $\GL(n)$ as follows. Pick $T\subset \GL(n)$ to be the subgroup of diagonal matrices, and choose $T$-weight bases in the terms of our complex. A natural choice of a basis is formed by pairs of standard $\mathbb{Z}_2$-graded tableau (see \cite[Section 2.4]{jerzy})), which are in turn "standardized" to the Cartan piece (\ref{eq:cartan}). Then computing the cohomology of $H^0(Tot^\bullet)$ using this basis gives the $T$-character of the cohomology groups of $\SS_\ll \R_2 \oo \SS_\mu (W/\R_1)^*$, which in turn determines its $\GL(n)$-structure uniquely. Alternatively, since the differentials are $\GL(n)$-equivariant, it is enough to keep track of highest weight vectors only.

We note that Schur complexes have been recently implemented by \cite{schuralg} through the computer algebra system Macaulay2 \cite{M2}.

\section{Applications for the equioriented $\A_3$ quiver}\label{sec:applications}

We start with the determination of the minimal free resolutions of orbit closures $\ol{O}_V$ for the $\A_3$ quiver. Recall the complex $F_\bullet$ with the notation as in (\ref{eq:basic}).

\begin{theorem}\label{thm:rational} 
The variety $\ol{O}_V$ is normal with rational singularities, and $F_\bullet$ is the minimal free resolution of the defining ideal of $\overline{O}_V$.
\end{theorem}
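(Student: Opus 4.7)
The plan is to invoke the Kempf--Lascoux--Weyman geometric technique, specifically \cite[Theorem 5.1.3]{jerzy}, applied to the resolution of singularities $q\colon Z \to \ol{O}_V$ of Proposition \ref{prop:desing}. By that theorem, to simultaneously obtain rational singularities of $\ol{O}_V$ (and hence normality and Cohen--Macaulayness) together with the fact that $F_\bullet$ is a minimal free resolution of $\mathcal{O}_{\ol{O}_V}$, it suffices to verify that $F_\bullet$ is concentrated in non-negative homological degrees, i.e.\ that
\[ H^j\bigl(X,\,\bigwedge^t \xi\bigr) = 0 \quad\text{whenever } j > t. \]

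Decomposing via Cauchy's formula as in the discussion leading to (\ref{eq:mult}), each such cohomology group splits as a direct sum indexed by partition pairs $(\ll,\mu)$ with $|\ll|+|\mu|=t$, each summand being a tensor product of cohomology on the flag factor and on the Grassmannian factor. By Theorem \ref{thm:bott}, the Grassmannian factor $H^{c\cdot u_\ll}(\Gr(c,d_3),\, \SS_{\ll'}\Q^*)$ vanishes unless $\ll$ contains the $u_\ll \times (u_\ll+c)$ rectangle, in which case $|\ll| \geq u_\ll(u_\ll+c) \geq c\cdot u_\ll$. The desired vanishing therefore reduces to the following statement on the flag factor: for each pair $(\ll,\mu)$,
\[ H^p\bigl(\Fl(r_1,r_2,n),\; \SS_\ll \R_2 \oo \SS_\mu (W/\R_1)^*\bigr) = 0 \quad\text{whenever } p > |\ll|+|\mu|-c\cdot u_\ll. \]

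The main obstacle is establishing this flag-variety vanishing uniformly, and this is where the entire machinery of Section \ref{sec:coho} is brought to bear. The splitting method of Section \ref{sec:split}, applied to either of the exact sequences (\ref{eq:sequence}) and combined with the relative Borel--Weil--Bott theorem via (\ref{eq:push}), already yields the required bound in the generic situation by computing explicit upper bounds through the split bundles $\B_1, \B_2$. In the residual cases where nontrivial cancellations between cohomology degrees of the split bundles are required (cf.\ Examples \ref{ex:later}, \ref{ex:truncschur}, \ref{ex:quiver}), the truncated Schur complex (\ref{eq:trunc}) of Section \ref{sec:refineschur} together with its hypercohomology spectral sequence enforces the missing cancellations; when this still does not suffice, the definitive algorithm of Section \ref{sec:definschur}, based on the total complex of the tensor product of the two Schur resolutions, reduces the remaining verifications to elementary linear algebra and handles every case. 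The hard part is confirming that these progressively refined tools, when combined with Lemma \ref{lem:symm} and the structural constraints of Section \ref{sec:geom}, in fact achieve the bound above for \emph{every} pair $(\ll,\mu)$ arising in the orbit closure under consideration.

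Finally, minimality of $F_\bullet$ is automatic from its form (\ref{eq:basic}): every $F_i$ with $i\geq 1$ is generated in strictly positive $A$-degrees (degree $\geq i$), and the $\GL(\mathbf{d})$-equivariant differentials, built from the Koszul differential on $\bigwedge^\bullet \xi$ together with the connecting maps of spectral sequences, cannot contribute scalar entries provided the Schur functors involved are non-trivial.
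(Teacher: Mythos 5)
Your reduction via \cite[Theorem 5.1.3]{jerzy} and Cauchy's formula to the flag-variety vanishing statement
\[ H^p\bigl(\Fl(r_1,r_2,n),\; \SS_\ll \R_2 \oo \SS_\mu (W/\R_1)^*\bigr) = 0 \quad\text{for } p \geq |\ll|+|\mu|-c\cdot u_\ll \]
(with $(\ll,\mu)\neq(0,0)$ and $\ll_{u_\ll+c}\geq u_\ll$) is correct, modulo the inequality: it must be $\geq$ rather than $>$ so that the $j=t$ case covers $F_0=A$ as well as $F_i=0$ for $i<0$. Your final remark on minimality is fine, as this is built into the form of (\ref{eq:basic}).

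However, your proof has a genuine gap at the central step, and you essentially acknowledge it: you write that ``the hard part is confirming that these progressively refined tools \dots in fact achieve the bound above for every pair $(\ll,\mu)$,'' but then offer no argument. The methods of Section \ref{sec:coho} do not supply this. The splitting method gives upper bounds for the cohomology of $\B_1,\B_2$ in particular cases but yields no uniform degree bound across all $(\ll,\mu)$; the refinements of Section \ref{sec:refineschur} similarly treat individual partitions; and the ``definitive algorithm'' of Section \ref{sec:definschur} is a procedure for computing the cohomology of a \emph{fixed} pair $(\ll,\mu)$ by linear algebra --- it cannot establish a vanishing statement ranging over infinitely many pairs. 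Appealing to it here is circular: you would have to run it on every pair that arises, which is not a proof.

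What the paper actually does at this step is a uniform argument, independent of the case-by-case machinery of Section \ref{sec:coho}. After disposing of the case where one of $\ll,\mu$ vanishes directly via (\ref{eq:durf}), it resolves $\SS_\mu(W/\R_1)^*$ by the Schur complex (\ref{eq:schur1}), which has length at most $|\mu|$, tensors by $\SS_\ll\R_2$, and passes to a truncated complex (as in (\ref{eq:trunc})) whose terms are all of the form $\SS_\nu W^*\oo\SS_\gamma\R_2$ with $u_\gamma\leq u_\ll$ and $\gamma_1+\cdots+\gamma_{u_\ll}\leq \ll_1+\cdots+\ll_{u_\ll}$. Since the complex has length $\leq|\mu|$, the required vanishing in degree $\geq|\ll|+|\mu|-c\cdot u_\ll$ reduces to $H^i(\SS_\gamma\R_2)=0$ for $i\geq|\ll|-c\cdot u_\ll$, which follows from the Borel--Weil--Bott theorem combined with the inequality $|\ll|-c\cdot u_\ll\geq\gamma_1+\cdots+\gamma_{u_\ll}$ (itself a consequence of $\ll_{u_\ll+c}\geq u_\ll$). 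This explicit bound is the missing content of your argument, and without it the theorem is not proved.
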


\begin{proof}
We apply  \cite[Theorem 5.1.3]{jerzy} as described in Section \ref{sec:geom}. We claim that in (\ref{eq:basic}) we have $F_0=A$ and $F_i =0$ for $i<0$. By (\ref{eq:mult}) and (\ref{eq:durf}), we are left to show that
\begin{equation}\label{eq:vanish}
H^i\big(\SS_{\ll} \R_2 \oo \SS_{\mu}(W/\R_1)^*\big) =0,
\end{equation}
whenever $i\geq |\ll| + |\mu|- c\cdot u$, $\ll_{u+c}\geq u$ and at least one of $\ll$ or $\mu$ is non-zero (here $u=u_\ll$). When one of $\ll$ or $\mu$ is zero, the claim follows as in (\ref{eq:durf}). So we can assume that both $\ll$ and $\mu$ are non-zero.

First, note that the Schur complex (\ref{eq:schur1}) resolving $\SS_{\mu'}(W/\R_1)^*$) has length at most $|\mu|$. Hence, so does the truncated complex obtained analogously to (\ref{eq:trunc}) (tensored with $\SS_\ll \R_2$). The bundles appearing in this complex are all of the form $S_\nu W^* \oo S_\gamma \R_2$, for $\nu$ a partition and $\gamma$ a non-increasing sequence with Durfee size at most $u$ satisfying $\gamma_1+ \dots +\gamma_{u} \leq \ll_1+ \dots + \ll_{u}$. Hence, it suffices to show that 
\[H^i(S_\gamma \R_2)=0, \mbox{ for } i\geq |\ll|-c\cdot u.\]
But $|\ll|-c\cdot u \geq \gamma_1+ \dots +\gamma_{u}$, hence we conclude similarly to (\ref{eq:vanish}) in the case $\mu=0$.
\end{proof}

For $1$-step orbit closures the above result follows from \cite{bubfree}. The following example is not $1$-step.

\begin{example}
Consider the variety of pairs of matrices $(A,B)$, where $A$ is $4\times 3$ and $B$ is $3\times 4$, so the dimension vector is $\mathbf{d}=(3,4,3)$. Consider the orbit closure $\ol{O}_V$ given by $\rank A\leq 1, \rank B\leq 1$ and $BA=0$, so that $V=I_2\oplus P_2 \oplus S$, with $S$ a semi-simple representation of $\A_3$. Then we see that $\ol{O}_V$ has codimension 13, and the only representation in $F_{13}$ is
\[\SS_{(3,3,3)} V_1 \oo \SS_{(0,0,0,0)} V_2 \oo \SS_{(3,3,3)} V_3^*.\]
$\ol{O}_V$ is Gorenstein (therefore has a symmetric resolution). By Proposition \ref{prop:lowcases}, calculating the cohomology groups of bundles can be done using the splitting method described in Section \ref{sec:split}. The number of irreducible $\GL(\mathbf{d})$-module summands obtained in the terms $F_0,F_1,F_2,F_3,F_4,F_5,F_6$ are $1,3,6,17,35,48,52$, respectively. In particular, the number of irreducible $\GL(\mathbf{d})$-modules in the terms of $F_\bullet$ form a unimodal sequence. We note that the fact that $\ol{O}_V$ has rational singularities follows in this case also from \cite[Corollary 4.2]{radical}.
\end{example}

We write $\kk[X,Y]=\kk[\Rep(Q,\mathbf{d})]$ for the coordinate ring of the space of matrices $(A,B)$, with $X$ and $Y$ being the corresponding matrices of generic variables. 

\begin{theorem}\label{thm:mingens}
Let $\ol{O}_V$ be the orbit closure given by matrices $(A,B)$ with $\rank A \leq a, \, \rank B \leq b$ and $\rank BA \leq c$. Then the minimal generators of the defining ideal of $\ol{O}_V$ in $\kk[X,Y]$ are given by the $(a+1)\times (a+1)$ minors of $X$, $(b+1) \times (b+1)$-minors of $Y$, together with the $(c+1) \times (c+1)$ minors of $Y\cdot X$ when $c<\min\{a,b\}$.
\end{theorem}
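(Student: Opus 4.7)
The strategy is to read the minimal generators of the defining ideal of $\ol{O}_V$ off the first syzygy module $F_1$ of the minimal free resolution constructed in Theorem \ref{thm:rational}. By (\ref{eq:basic}),
\[F_1 \;=\; \bigoplus_{t\geq 1} H^{t-1}\bigl(X,\,\bw^t\xi\bigr)\otimes A(-t),\]
and the Cauchy decomposition (\ref{eq:mult}) expresses each summand as a sum over pairs of partitions $(\ll,\mu)$ with $|\ll|+|\mu|=t$. The task reduces to identifying precisely which pairs produce non-zero cohomology in the top degree $t-1$.

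I would first exhibit three distinguished pairs producing the three claimed families. For $\ll=0$, $\mu=(a+1)$ the $V_3^*$-slot in (\ref{eq:mult}) is trivial, and a direct Borel--Weil--Bott computation on $\Gr(a,d_2)$ with weight $\delta=(0^{d_2-a-1},-(a+1),0^a)$ places the cohomology in degree $a$ as $\SS_{(0^{d_2-a-1},-1^{a+1})}V_2\cong \bw^{a+1}V_2^*$; paired with $\SS_{\mu'}V_1=\bw^{a+1}V_1$ this contributes $\bw^{a+1}V_1\otimes\bw^{a+1}V_2^*$ in internal degree $a+1$, the representation of $(a+1)$-minors of $X$. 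For $\mu=0$ and $\ll=(b-c+1,1^c)$ (a hook with $u_\ll=1$), equation (\ref{eq:durf}) yields $\bw^{b+1}V_3^*$ in the $V_3^*$-slot, and a Bott computation on $\Gr(d_2-b+c,d_2)$ yields $\bw^{b+1}V_2$ in the $V_2$-slot, producing the $(b+1)$-minors of $Y$ in degree $b+1$. Finally, assuming $c<\min\{a,b\}$, the pair $\ll=(1^{c+1})$, $\mu=(c+1)$ has $\bw^{c+1}V_3^*$ in the $V_3^*$-slot by (\ref{eq:durf}); splitting $R_2=R_1\oplus R_2/R_1$ and applying the projection formula (valid since $c<b$) reduces the flag cohomology to $H^{c+1}\bigl(\Gr(a,d_2),\,\bw^{c+1}R_1\otimes \SS_{(c+1)}((V_2/R_1)^*)\bigr)$, which by Bott (the entries of $\delta+\rho$ form exactly $\{1,\dots,d_2\}$) equals the trivial $\GL(V_2)$-representation. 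This produces $\bw^{c+1}V_1\otimes\bw^{c+1}V_3^*$ in internal degree $2(c+1)$, matching the $(c+1)$-minors of $YX$.

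The main obstacle is the second step: showing that no other pair $(\ll,\mu)$ contributes to $F_1$. The constraint $\ll_{u_\ll+c}\geq u_\ll$ from (\ref{eq:durf}) already restricts $\ll$ drastically. For each remaining candidate I would establish the vanishing
\[H^{t-1-c\cdot u_\ll}\bigl(\Fl,\,\SS_\mu((V_2/R_1)^*)\otimes \SS_\ll R_2\bigr)=0\]
by combining the splitting method of Section \ref{sec:split}, the explicit hook-case computations of Proposition \ref{prop:hook}, and, where cancellation across cohomology degrees has to be tracked, the Schur-complex refinements of Sections \ref{sec:refineschur}--\ref{sec:definschur}. The rigidity of top-degree cohomology should keep the argument tractable, though the case analysis is likely intricate.

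Once no other summands of $F_1$ remain, the conclusion follows from $\GL(\mathbf{d})$-equivariance of the differentials of $F_\bullet$: each of the representations $\bw^{a+1}V_1\otimes\bw^{a+1}V_2^*$, $\bw^{b+1}V_2\otimes\bw^{b+1}V_3^*$ and $\bw^{c+1}V_1\otimes\bw^{c+1}V_3^*$ admits a unique (up to scalar) equivariant map into $A=\kk[X,Y]$, whose image is exactly the corresponding family of minors.
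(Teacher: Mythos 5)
Your identification of the three contributing pairs $(\ll,\mu)=\bigl(0,(a+1)\bigr)$, $\bigl((b-c+1,1^c),0\bigr)$, $\bigl((1^{c+1}),(c+1)\bigr)$ and their cohomology computations match the paper, as does the overall strategy of reading generators off $F_1$. However, you then say the main obstacle is proving no other pair contributes, and you only gesture at a case analysis "combining the splitting method, Proposition \ref{prop:hook}, and the Schur-complex refinements" that is "likely intricate." That is where the real content of the theorem lies, and the proposal does not supply it. The paper's proof of exhaustion is not a brute-force sweep: it reduces, via the truncated Schur complex (\ref{eq:trunc}), to the statement that a nonzero top-degree cohomology group (\ref{eq:cohomin}) would force $H^{|\ll|-c\cdot u -1}(\SS_\gamma\R_2)\neq 0$ for some $\gamma$ occurring in the Littlewood--Richardson product of $\ll$ and $-\mu'$; it then splits into the two cases $\gamma_1\leq 0$ and $\gamma_1\geq 1$, and in each case runs a sharp chain of inequalities built on the bound $N(-\mu,\ll)\leq u_\mu + |\mu^+| \leq u_\mu^2+|\mu^+|+|\mu^-|=|\mu|$ from \cite[Lemma 3.2]{bubfree} to force $u=u_\gamma=1$, $\mu^-=0$ and $u_\mu\in\{0,1\}$, pinning $\ll$ and $\mu$ down to exactly the three distinguished shapes; the remaining borderline shapes are killed by Proposition \ref{prop:hook}(2). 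None of this appears in your proposal, so as written it establishes the contributions but not the exhaustion, and cannot be considered a proof of minimality of the generating set.

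A secondary, minor gap: you assert that each of $\bw^{a+1}V_1\oo\bw^{a+1}V_2^*$, $\bw^{b+1}V_2\oo\bw^{b+1}V_3^*$, $\bw^{c+1}V_1\oo\bw^{c+1}V_3^*$ "admits a unique (up to scalar) equivariant map into $\kk[X,Y]$ whose image is exactly the corresponding family of minors," but you do not justify the multiplicity-one claim; the paper at least states this explicitly for the first family, and it should be noted that multiplicity one in the polynomial ring in the relevant degree is what guarantees the image is the space of minors.
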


\begin{proof}
Put $n=d_2, r_1=a, r_2 = n-b+c$ as in Section \ref{sec:geom}. We can assume $0<r_1<r_2<n$. Continuing with the reasoning as in Theorem \ref{thm:rational}, the term $F_1$ of the complex $F_\bullet$ is built from the cohomology groups
\begin{equation}\label{eq:cohomin}
H^{|\ll|+|\mu|-c\cdot u - 1}\big(\,\SS_{\ll} \R_2 \oo \SS_{\mu}(W/\R_1)^*\,\big),
\end{equation}
where at least one of $\ll$ or $\mu$ is non-zero, and $\ll_{u+c}\geq u$ (with $u=u_\ll$). 

First, assume that $\ll=0$. Through a computation similar to (\ref{eq:durf}), we obtain that $H^{|\mu|-1}(\SS_{\mu}(W/\R_1)^*)\neq 0$ if and only if $\mu=(a+1)$, in which case $H^{a}(\SS_{(a+1)}(W/\R_1)^*) = \SS_{(1^{a+1})} W^*$. By (\ref{eq:mult}), the contributing term to $F_1$ is the representation $\SS_{(1^{a+1})} V_1 \oo \SS_{(1^{a+1})} V_2^*$. This $\GL(\mathbf{d})$-representation appears in $\kk[X,Y]$ with multiplicity one, and it is spanned by the $(a+1) \times (a+1)$ minors of $X$.

Now let $u\geq 1$. Consider the truncated complex with terms of the form $\SS_\nu W^* \oo S_\gamma \R_2$  as in the proof of Theorem \ref{thm:rational}. The latter proof shows that if the group (\ref{eq:cohomin}) is not zero, then it must correspond to the cohomology of a bundle from the last term of the complex. Hence, there is a non-increasing sequence $\gamma$ in the Littlewood--Richardson product of $\ll$ and $-\mu'$ with  $H^{|\ll|-c\cdot u - 1} (\SS_\gamma \R_2) \neq 0$. 

If $\gamma_1\leq 0$, then by Theorem \ref{thm:bott} we get that $|\ll|=c\cdot u + 1$, hence $\ll=(1^{c+1})$. By Proposition \ref{prop:hook} (1), the bundle $\SS_{\ll} \R_2 \oo \SS_{\mu}(W/\R_1)^*$ can have cohomology only in degree $N(-\mu, \ll)$, when $c<a$. By \cite[Lemma 3.2]{bubfree}, we have 
\[N(-\mu, \ll) \leq u_\mu + |\mu^+| \leq  u_\mu^2 + |\mu^+|+ |\mu^-| = |\mu|.\]
Hence, in order for equalities to hold above, we must have $\mu^-=0$ and $u_\mu^2 = u_\mu$. The case $u_\mu=0$ yields no cohomology in (\ref{eq:cohomin}), hence we must have $\mu = (k)$, for some $k\geq 1$. An easy computation now shows that $N(-\mu, \ll)=|\mu|$ if and only if $k=c+1$, when
\[H^{c+1}(\,\SS_{(1^{c+1})} \R_2 \oo \SS_{(c+1)}(W/\R_1)^*\,) = \SS_{(0^n)} W.\]
By (\ref{eq:mult}), the contributing term to $F_1$ is the representation $\SS_{(1^{c+1})} V_1  \oo \SS_{(1^{c+1})} V_3^*$, which is spanned by the $(c+1)\times (c+1)$ minors of $Y\cdot X$.

Now assume that $\gamma_1\geq 1$, so that $u_\gamma \geq 1$. By Theorem \ref{thm:bott}, $H^i(\SS_\gamma \R_2)$ is non-zero if and only if $i=u_\gamma \cdot (b-c)$ and $\gamma_{u_\gamma}\geq u_\gamma + b-c$. We have 
\[|\ll|-c\cdot u - 1 \geq u \cdot \gamma_{u_\gamma} -1 \geq u\cdot u_\gamma-1+ u \cdot (b-c) \geq  u_\gamma \cdot (b-c).\]
For the equalities to hold, we must have $u=u_\gamma=1$, moreover, $\ll$ must be the hook $\ll = (b-c+1, 1^c)$. If $\mu=0$, then by (\ref{eq:mult}) we obtain the representation $\SS_{(1^{b+1})} V_2 \oo \SS_{(1^{b+1})} V_3^*$ that is spanned by the $(b+1)\times (b+1)$ minors of $Y$.

We are left to show that if $\ll=(b-c+1,\, 1^c)$ and $u_\mu \geq 1$, then the cohomology (\ref{eq:cohomin}) is zero. Consider the split bundle $\B_1$ as in Section \ref{sec:split}. As seen in the proof of Proposition \ref{prop:hook} (2), those summands in the decomposition of $\SS_\ll (\R_1 \oplus \R_2/\R_1) $ that can yield cohomology must of the form $\SS_\ll \R_1$, or $\SS_{(b-c+1,\, 1^x)}(\R_2/R_1) \oo \SS_{(1^y)} \R_1$ with $x+y=c$. In the former case, the cohomology is in degree $N(-\mu,\,\ll)$. By \cite[Lemma 3.2]{bubfree}, we have
\[N(-\mu,\,\ll)\leq u_\mu + |\mu^+|+ b-c \leq u_\mu^2 + |\mu^+|+ |\mu^-| +b-c = |\mu| + b-c.\]
In order for equalities to hold in the above, we must have $u_\mu=1$ and $\mu^-=0$. By Proposition \ref{prop:hook} (2), the corresponding cohomology (\ref{eq:cohomin}) vanishes in this case. On the other hand, working with the summand $\SS_{(b-c+1, \,1^x)}(\R_2/R_1) \oo \SS_{(1^y)} \R_1$ we arrive to the cohomology of $\SS_{(1^y)} \R_1 \oo \SS_\gamma (W/\R_1)$, where $\gamma$ is in the Littlewood--Richardson product of $-\mu$ and $(1^{x+b-c+1})$. Write $\gamma=(\alpha,\,-\beta)$, with $\alpha, \beta$ partitions. By \cite[Lemma 3.2]{bubfree}, we have
\[N(\gamma, \,1^y)= N(-\beta,\, 1^y) \leq u_\beta + |\beta^+| \leq  u_\mu^2 + |\mu^+| \leq |\mu|.\]
In order for equalities to hold, we again must have $u_\mu=1$ and $\mu^-=0$. By Proposition \ref{prop:hook} (2) again, the cohomology (\ref{eq:cohomin}) vanishes in this last case.
\end{proof}

We note that the fact that the minors generate a radical ideal follows also from \cite{lak}.

\section*{Acknowledgments}

The author is grateful to Jerzy Weyman for helpful conversations and suggestions.

\bibliographystyle{alpha}
\bibliography{biblo}

\end{document}